\newtheorem{theorem}{Theorem}[section]
\newtheorem{proposition}[theorem]{Proposition}
\theoremstyle{definition}
\newcommand{\é}{\'e}
\def\ito{It\={o}}
\newcommand{\LC}{\leb^2_{\bm C}(\re^n,d^n x)}
\newcommand{\sde}{\textsl{SDE}}
\newcommand{\re}{\bm{R}}
\newcommand{\pro}{\mathbb{P}}
\newcommand{\procl}{(Z_t)_{t\geq0}}
\newcommand{\asp}{\mathbb{E}}
\newcommand{\dir}{\mathcal{E}}
\newcommand{\bor}{\mathcal{B}}
\newcommand{\alg}{\mathcal{F}}
\newcommand{\filta}{(\mathcal{F}_t)_{t\geq0}}
\newcommand{\leb}{\mathcal{L}}
\newcommand{\spp}{(\Omega, \alg,\pro)}
\newcommand{\refeq}[1]{~(\ref{#1})}
\newcommand{\id}{\textit{id}}
\newcommand{\LS}{\textit{L-S}}
\begin{document}
\thispagestyle{empty}

\title{\Huge \textbf{Markov processes\\ and\\ generalized Schr\"odinger equations}}
\author{\textsc{Andrea Andrisani}\\
Dipartimento di Matematica, Universit\`a di Bari\\
via E. Orabona 4, 70125 Bari, Italy\\
 \vspace{10pt}
email: \textit{deflema@yahoo.it}\\
{\textsc{Nicola Cufaro Petroni}}\\
Dipartimento di Matematica and \textsl{TIRES}, Universit\`a di Bari\\
\textsl{INFN} Sezione di Bari\\
via E. Orabona 4, 70125 Bari, Italy\\
email: \textit{cufaro@ba.infn.it}}

\date{revised: October 28, 2011}

\maketitle

\begin{abstract}
\noindent Starting from the forward and backward infinitesimal
generators of bilateral, time-homogeneous Markov processes, the
self-adjoint Hamiltonians of the generalized Schr\"odinger equations
are first introduced by means of suitable Doob transformations.
Then, by broadening with the aid of the Dirichlet forms the results
of the Nelson stochastic mechanics, we prove that it is possible to
associate bilateral, and time-homogeneous Markov processes to the
wave functions stationary solutions of our generalized Schr\"odinger
equations. Particular attention is then paid to the special case of
the L\évy-Schr\"odinger (\LS) equations and to their associated
L\évy-type Markov processes, and to a few examples of Cauchy
background noise.
\end{abstract}

\noindent PACS: 02.50.Ga, 03.65.Ca, 05.40.Fb

\noindent MSC: 47D07,  60G51, 60J35

\noindent \textsc{Key words}: Markov processes; Stochastic mechanics; Doob transformation.

\section{Introduction}\label{se:articolo1}

In a few recent papers~\cite{cufaro6} it has been proposed to broaden the scope of the well known
relation between the Wiener process and the Schr\"odinger
equation~\cite{nelson,feynman,bohmvigier,guerra} to other suitable Markov processes. This idea --
already introduced elsewhere, but essentially only for stable processes~\cite{garbacz,laskin} --
led to a \LS\ (L\évy--Schr\"odinger) equation containing additional integral terms which take into
account the possible jumping part of the background noise. This equation has been presented in the
framework of \emph{stochastic mechanics}~\cite{nelson,guerra} as a model for systems more general
than just the usual quantum mechanics: namely as a true \emph{dynamical theory of L\évy processes}
that can find applications in several physical fields~\cite{applications}. However in the previous
papers~\cite{cufaro6} our discussion was essentially heuristic and rather oriented to discuss the
basic ideas, and to show a number of explicit examples of wave packets solutions of these \LS\
equations in the free case, by pointing out the new features as for instance their time dependent
\emph{multi-modality}. In particular the derivation of the \LS\ equation consistently followed a
time-honored~\cite{albeverio} formal procedure consisting in the replacement of $t$ by an
imaginary time variable $it$. While this usually leads to correct results, however, it is apparent
that it can only be a heuristic, handpicked tactics implemented just in order to see where it
leads, and if the results are reasonable: then -- as already claimed in our previous papers -- a
more solid foundation must be found to give substance to these findings. The aim of the present
paper is in fact to pursue this enquiry by giving a rigorous presentation of the relations between
the \LS\ equations and their background Markov processes.

In the original Nelson papers~\cite{nelson} the Schr\"odinger equation of quantum mechanics was
associated to the diffusion processes weak solutions of the \sde\ (Stochastic Differential
Equations)
\begin{equation}
\label{eq:diffuzione}
 dX_t=b(X_t,t)dt+dW_t
\end{equation}
where $W_t$ is a Wiener process. Our aim is then to analyze how this Nelson approach can be
generalized when a wider class of Markov processes is considered instead of the diffusion
processes\refeq{eq:diffuzione}, and what kind of equations are involved, in particular, when
L\'evy processes are considered instead of $W_t$. The L\'evy
processes~\cite{sato,applebaum,protter,cufaro2} can indeed be considered as the most natural
generalization of the Wiener process: they have stationary, independent increments, and they are
stochastically continuous. The Wiener process itself is a L\évy process, but it essentially
differs from the others because it is the unique with \emph{a.s.}\ (almost surely) continuous
paths: the other L\évy processes, indeed, typically show random jumps all along their
trajectories. In the recent years we have witnessed a considerable growth of interest in non
Gaussian stochastic processes -- and in particular into L\'evy processes -- in domains ranging
from statistical mechanics to mathematical finance. In the physical field the research scope is
presently focused mainly on the stable processes and on the corresponding fractional
calculus~\cite{garbacz,laskin,mainardi}, but in the financial domain a vastly more general type of
processes is at present in use~\cite{cont}, while interesting generalizations seem to be at
hand~\cite{tsallis}. Here we suggest that the stochastic mechanics should be considered as a
dynamical theory of the entire gamut of the \emph{infinitely divisible} (not only stable) L\évy
processes with time reversal invariance, and that the horizon of its applications should be
widened even to cases different from the quantum systems.

This approach presents several advantages: on the one hand the use of general infinitely divisible
processes lends the possibility of having realistic, \emph{finite variances}, a situation ruled
out for non Gaussian, stable processes; on the other there are examples of non stable L\'evy
processes which are connected with the simplest form of the quantum, \emph{relativistic
Schr\"odinger equation}: a link with important physical applications that was missing in the
original Nelson model and was recognized only several years later~\cite{ichinose}. This last
remark shows, among others, that the present inquiry is not only justified by the a desire of
formal generalization, but is required by the need to attain physically meaningful cases that
otherwise would not be contemplated in the narrower precinct of the stable laws. Of course it is
well known that the types of general infinitely divisible laws are not closed under convolution:
when this happens the role of the scale parameters becomes relevant since a change in their values
can not be compensated by reciprocal changes in other parameters, and the process no longer is
scale invariant, at variance with the stable processes. This means that, to a certain extent, a
scale change produces different processes, so that for instance we are no longer free to look at
the process at different time scales by presuming to see the same features. Since, however, the
infinitely divisible distributions can have a finite variance, it easy to prove that the L\'evy
processes generated by these infinitely divisible laws will always have a finite variance which
grows linearly with the time: a feature typical of the ordinary (non anomalous) diffusions, while
the stable non Gaussian processes are bound to show typical (anomalous) super- and sub-diffusive
behavior~\cite{cont}.

To give a rigorous justification of the L-S equation, essentially introduced in~\cite{cufaro6} by
means of an analogy, let us first remark that the original Nelson approach for deriving the
Schr\"odinger equation was based on a deep understanding of the \emph{dynamics} of the stochastic
processes, while this reckoned on new definitions of the kinematical quantities -- \emph{forward
and backward mean velocities} and \emph{mean accelerations} -- that anyway safely revert to the
ordinary ones when the processes degenerate in deterministic trajectories. For the time being,
however, our approach will be rather different: we will not resort openly to an underlying
dynamics, but starting instead with the infinitesimal generators $L$ of a semigroup in a Hilbert
space we will explore on the one hand under what conditions we can associate it to a suitable
Markov process $X_t\in\re^n$ with \emph{pdf} (probability density function) $\rho_t$; and on the
other the formal procedures leading from $L$ to a self-adjoint, bounded from below operator $H$ on
$\LC$ and to a wave function $\Psi_t\in\LC$ which turns out to be a solution of the
\emph{generalized Schr\"odinger equation}
\begin{equation}\label{gseq}
 i\partial_t\Psi_t=H\Psi_t
\end{equation}
with $|\Psi_t|^2=\rho_t,\;\forall t\in\re$. While the first task will be accomplished by resorting
to the properties of the Dirichlet forms $\dir$~\cite{fukushima,rock} that can be defined from
$L$, the second result will be obtained by following the path of the Doob
transformations~\cite{albeverio,morato}.

The paper is then organized as follows: while in the Section~\ref{se:articolo3} we will first
recall the less usual features of the Markov processes of our interest, in the
Section~\ref{dirform} we will introduce their associated infinitesimal generators and Dirichlet
forms, and in the subsequent Section~\ref{se:articolo2} we will briefly summarize the essential
notations about the L\évy processes. In the Section~\ref{sm} we will then recall how, by means of
the Doob transformations previously defined in the Section~\ref{se:articolo4}, the stationary
solutions of the usual Schr\"odinger equation actually admit a stochastic representation in terms
of the diffusion processes\refeq{eq:diffuzione}. Finally in the Section~\ref{se:articolo5} we will
focus our attention on our main result about the L\évy-type~\cite{jacob}, Markov processes
associated to the stationary solutions of the \LS\ (\emph{L\évy-Schr\"odinger}) equation
\begin{equation}
\label{eq:intro4}
 i\partial_t\Psi_t=-L_0\Psi_t+V\Psi_t
\end{equation}
which is a particular form of\refeq{gseq}. Here $V$ is an suitable real function, while for an
infinitely derivable function on $\re^n$ with compact support $f\in C_0^{\infty}(\re^n)$, $L_0$
explicitly operates in the following way
\begin{equation*}
 [L_0f](x)=\alpha_{ij}\partial_{ij}^2f(x)+\int_{y\neq0}\left[f(x+y)-f(x)-\bm 1_{B_1}(y)y_i\partial_if(x)\right]\,\ell(dy)
\end{equation*}
where $\alpha_{ij}$ is a symmetric, positive definite matrix, $\bm 1_{B_1}(y)$ is the indicator of
the set $B_1=\{y\in\re^n\,:\,|y|\leq 1\}$, and $\ell(dy)$ is a \emph{L\'evy
measure}~\cite{sato,applebaum}. The name of the equation\refeq{eq:intro4} is due to the fact that
$L_0$ turns out to be the infinitesimal generator of a symmetric L\'evy process, while $V$ plays
the role of a potential, so that\refeq{eq:intro4} closely resembles the usual Schr\"odinger
equation that one obtains when $L_0$ is the infinitesimal generator of a Wiener process. In the
last Section~\ref{cauchy} a few examples of stationary states of Cauchy-Schr\"odinger equations
with their associated L\évy-type processes are explicitly discussed.

\section{Markov Processes}\label{se:articolo3}

A stochastic processes $X_t$ is usually defined for $t\geq0$, but it will be important for us to
consider also processes defined for every $t\in\re$: we will call them \emph{bilateral processes}.
This will allow us to introduce \emph{forward} and \emph{backward} representations that will be
instrumental to define the suitable symmetric operators and the self-adjoint Hamiltonians needed
in our generalized Schr\"odinger equations. It will be useful, moreover, to recall that we will
call a process $X_t$ \emph{stationary} when all its joint distributions (for a Markov process,
those at one and two times are enough) are invariant for a change of the time origin. In this case
the distributions at one time are invariants, and the conditional (transition) distributions
depend only on the time differences. On the other hand we will call it \emph{time-homogeneous}
when just the conditional (transition) distributions are independent from the time origin and
depend only on the time differences. In this case however the process can possibly be non
stationary when the one-time distributions are not constant, and as a consequence the joint
distributions depend on  the changes of the time origin.

Let then $X=(X_t)_{t\in \re}$ be a bilateral, time-homogeneous
Markov process defined on a probability space $\spp$ endowed with
its natural filtration, and taking values on $(\re^n,\bor(\re^n))$.
We will first of all denote respectively by $p_t$ and $\tilde p_t$
its \emph{forward and backward transition functions} defined as
\begin{equation}\label{eq:definizionetransizione}
  p_t(x,B):=\pro\{X_{s+t}\in B\,|\,X_s=x\},\qquad \tilde p_t(x,B):=\pro\{X_{s-t}\in B\,|\,X_s=x\}
\end{equation}
for $s\in\re$, $t\geq0$, $x\in\re^n$ and $B\in\bor(\re^n)$. We will
say that $\mu$ is an invariant measure for $X$ when
\begin{equation*}
 \mu(B)=\int p_t(x,B)\,\mu(dx) =\int \tilde p_t(x,B)\,\mu(dx)\qquad\qquad t>0
\end{equation*}
for $B\in\bor(\re^n)$. Remark that here $\mu$ is not necessarily supposed to be a
\emph{probability} measure. We will indeed keep open the possibility of $X$ being a stationary
process with a general $\sigma$-finite measure as one time marginal, rather than a strictly
probabilistic one. In this case $X$ actually is an \emph{improper process}, namely a process which
is properly defined as a measurable application from an underlying probabilizable space into a
trajectory space and is adapted to a filtration, but which is endowed with a measure which is not
finite. In particular we will find instrumental the use of the Lebesgue measure on
$(\re^n,\bor(\re^n))$ that turns out to be invariant for many of our semigroups and can
consequently be adopted as the overall measure of the process. This is on the other hand not new
if we think to the case of plane waves solutions of the Schr\"odinger equation in quantum
mechanics.

Given an invariant measure $\mu$ it is possible to prove~\cite{andrisani} that, always for
$t\geq0$, we can define on $\leb^2(\re^n,d\mu)$ endowed with the usual scalar product $\langle
f,g\rangle_\mu$ the two semigroups
\begin{eqnarray*}
 [T_tf](x)&:=&\int f(y)\,p_t(x,dy)=\asp\{f(X_{s+t})\,|\,X_s=x\} \cr
 [\tilde T_tf](x)&:=&\int f(y)\,\tilde
 p_t(x,dy)=\asp\{f(X_{s-t})\,|\,X_s=x\}=\asp\{f(X_{s})\,|\,X_{s+t}=x\}
\end{eqnarray*}
respectively called  \emph{forward and backward semigroups}. We also denote by $(L,D(L))$ and
$(\tilde L,D(\tilde L))$, with the specification of their domains of definition, the corresponding
infinitesimal generators. For these semigroups it is possible to prove that
\begin{equation}
\label{eq:adjoint_re}
 T_t^{\dag}=\tilde T_t\qquad\quad t\geq0
\end{equation}
In particular when $T_t$ is self-adjoint so that $T_t^\dag=\tilde T_t=T_t$ the Markov process
$X_t$ is also called \emph{$\mu$-symmetric}, while we will say that the process is simply
\emph{symmetric} when $\pro_{X_t}(B)=\pro_{X_t}(-B)$ for every $B\in\bor(\re^n)$: these two
notions are however strictly related~\cite{applebaum}. We will moreover call the process
\emph{rotationally invariant} if $P_{X_{t}}(B)=P_{Z_{t}}(\mathbb{O}B)$ for every Borel set $B$ and
for every given orthogonal matrix $\mathbb{O}$.

We finally introduce also the \emph{space-time version}~\cite{sharp}
$Y$ of $X$, namely the process
\begin{equation}
 Y_t=(X_t,\tau_t)
\end{equation}
on $(\re^{n+1},\bor(\re^{n+1}))$, with just one more
\emph{degenerate} component: $\tau_t=t\;a.s.$ It is easy to prove
then that the $Y$ forward and backward semigroups and generators --
now denoted by $T_t^Y,\tilde T_t^Y,L^Y$ and $\tilde{L}^Y$ -- are
defined on the space $\leb^2(\re^{n+1},d\mu\, dt)$ and verify
relations similar to\refeq{eq:adjoint_re}, namely
$(T_t^Y)^{\dag}=\tilde T_t^Y$.  This space-time version $Y$ will be
useful for two reasons: first $Y$ is always
time-homogeneous~\cite{andrisani}, even when $X$ it is not; second
the Doob transforms of combinations of their generators (see the
subsequent Section~\ref{se:articolo4}) will give rise exactly to the
space-time operators needed to recover our generalized Schr\"odinger
equations.

\section{Dirichlet forms}\label{dirform}

Up to now we have defined semigroups starting from suitable, given Markov processes, but in this
paper we will be mainly concerned with the reverse question: under which conditions can we define
a Markov process from a given semigroup $(T_t)_{t\geq0}$  on a real Hilbert space $\mathfrak{H}$
with scalar product $\langle\cdot,\cdot\rangle$ and norm $\|\cdot\|\,$? This well known problem
can be faced in several ways, and we will choose to approach it from the standpoint of the
Dirichlet forms. We refer the reader to classical monographs~\cite{fukushima,rock} for an
extensive discussion about this argument. Let $(\dir,D(\dir))$ be a positive definite, bilinear
form on $\mathfrak H$, endowed with the norm on $D(\dir)$
\begin{equation*}
 \|u\|_1^2:=\dir(u,u)+\|u\|^2
\end{equation*}
Some bilinear forms can naturally be associated to linear operators $L$ in the Hilbert space
$\mathfrak{H}$, for instance as $\dir(u,v)=-\langle Lu,v\rangle$, and we look for operators which
are the generators of a semigroup $T_t$, because this could produce the required link between
Markov processes and bilinear forms. In particular it is well known that semigroup generators are
always associated to \emph{coercive, closed} bilinear forms~\cite{rock}
\begin{theorem}\label{th:coercive_semigroup}
Let $(\dir,D(\dir))$ be a coercive, closed form on $\mathfrak{H}$: then there exist a pair of
operators $(L,D(L))$ and $(\tilde L,D(\tilde L))$ on $\mathfrak{H}$, with
\begin{eqnarray*}
D(L)&:=&\{u\in D(\dir)|\ v\rightarrow \dir(u,v)\mbox{ is continuous w.r.t. }\|\cdot\|_1\mbox{ on }D(\dir)\}\\
D(\tilde L)&:=& \{v\in D(\dir)|\ u\rightarrow \dir(u,v)\mbox{ is continuous w.r.t.
}\|\cdot\|_1\mbox{ on } D(\dir)\}
\end{eqnarray*}
and
\begin{eqnarray}
\label{eq:gen_adjoint}
\dir(u,v)=-\langle Lu,v\rangle,\qquad\quad u\in D(L),\; v\in D(\dir)\\
\dir(u,v)=-\langle u,\tilde Lv\rangle,\qquad\quad u\in D(\dir),\;
v\in D(\tilde L)\label{eq:gen_adjoint2}
\end{eqnarray}
which are the infinitesimal generators of two strongly continuous,
contraction semigroups $(T_t)_{t\geq 0}$ and $(\tilde T_t)_{t\geq
0}$ such that
\begin{equation*}
 T_t^{\dag}=\tilde T_t\qquad\quad\forall\;t\geq0
\end{equation*}
\end{theorem}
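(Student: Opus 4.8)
The plan is to invoke the standard correspondence theorem for coercive closed forms, as stated in the referenced monograph \cite{rock}. Let me sketch how I would structure the argument.

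The plan is to realize $L$ and $\tilde L$ as the generators obtained from the resolvents canonically attached to the coercive closed form $\dir$, and then to read off the adjoint relation from the duality between these two resolvents. Since this is the standard correspondence between coercive closed forms and strongly continuous contraction resolvents, I would follow the construction in \cite{rock} and reproduce only the key steps.

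First I would introduce, for every $\beta>0$, the shifted form $\dir_\beta(u,v):=\dir(u,v)+\beta\langle u,v\rangle$ on $D(\dir)$. Coercivity (the sector condition together with $\dir(u,u)\geq 0$) makes $\dir_\beta$ a continuous, coercive bilinear form on the Hilbert space $(D(\dir),\|\cdot\|_1)$, so the Lax--Milgram theorem yields, for each $f\in\mathfrak{H}$, unique elements $G_\beta f,\tilde G_\beta f\in D(\dir)$ with
\[
 \dir_\beta(G_\beta f,v)=\langle f,v\rangle=\dir_\beta(v,\tilde G_\beta f),\qquad v\in D(\dir).
\]
Testing the first identity against $v=G_\beta f$ and using $\dir(u,u)\geq0$ gives $\beta\|G_\beta f\|^2\leq\langle f,G_\beta f\rangle\leq\|f\|\,\|G_\beta f\|$, whence $\|\beta G_\beta\|\leq1$; the resolvent identity $G_\alpha-G_\beta=(\beta-\alpha)G_\alpha G_\beta$ then follows by a direct substitution using the defining relations, and the same estimates hold for $\tilde G_\beta$.

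Next I would verify that $\{G_\beta\}_{\beta>0}$ is strongly continuous, i.e.\ $\beta G_\beta f\to f$ as $\beta\to\infty$: this is where the \emph{closedness} of $\dir$ enters, since one controls $\|G_\beta f\|_1$ on $D(\dir)$ and uses the density of $D(\dir)$ in $\mathfrak{H}$ to pass to the limit. By the Hille--Yosida theorem the strongly continuous contraction resolvent $\{G_\beta\}$ is then of the form $G_\beta=(\beta-L)^{-1}$ for a densely defined generator $(L,D(L))$ of a strongly continuous contraction semigroup $(T_t)_{t\geq0}$, with $D(L)=\mathrm{Range}(G_\beta)$ and $Lu=\beta u-G_\beta^{-1}u$; the same construction applied to $\tilde G_\beta$ produces $(\tilde L,D(\tilde L))$ and $(\tilde T_t)_{t\geq0}$. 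For $u=G_\beta f\in D(L)$ and $v\in D(\dir)$ one computes
\[
 \dir(u,v)=\dir_\beta(u,v)-\beta\langle u,v\rangle=\langle f,v\rangle-\beta\langle u,v\rangle=-\langle Lu,v\rangle,
\]
which is\refeq{eq:gen_adjoint}, the stated domain characterization of $D(L)$ being exactly the requirement that $v\mapsto\dir(u,v)$ be $\|\cdot\|_1$-continuous so that Riesz representation returns an element of $\mathfrak{H}$; the analogous computation gives\refeq{eq:gen_adjoint2}. Finally, testing the two defining relations against each other,
\[
 \langle G_\beta f,g\rangle=\dir_\beta(G_\beta f,\tilde G_\beta g)=\langle f,\tilde G_\beta g\rangle,
\]
shows $G_\beta^{\dag}=\tilde G_\beta$, hence $L^{\dag}=\tilde L$ and therefore $T_t^{\dag}=\tilde T_t$ for all $t\geq0$.

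The main obstacle is the strong continuity of the resolvent. The contraction bound and the resolvent identity are immediate consequences of Lax--Milgram, but verifying $\beta G_\beta f\to f$ genuinely requires that the form be \emph{closed}: one must show that the approximants $\beta G_\beta f$ remain in $D(\dir)$ in a controlled fashion and converge in $\mathfrak{H}$, which rests on the completeness of $D(\dir)$ under $\|\cdot\|_1$ together with the density of $D(\dir)$ in $\mathfrak{H}$. Once this point is settled, the Hille--Yosida theorem and the duality of the resolvents make the remaining assertions routine.
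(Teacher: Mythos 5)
The paper does not actually prove this theorem: it is quoted, with attribution, from the Ma--R\"ockner monograph cited as~\cite{rock}, and your sketch reproduces precisely the standard construction given there --- Lax--Milgram applied to the shifted form $\dir_\beta$ to build the dual resolvents $G_\beta$ and $\tilde G_\beta$, the contraction bound and resolvent identity, strong continuity of the resolvent from closedness plus density, Hille--Yosida to pass to the semigroups, and duality of the resolvents to get $T_t^{\dag}=\tilde T_t$. So the proposal is correct and follows the intended argument; your identification of closedness as the ingredient responsible for $\beta G_\beta f\to f$ is exactly where the real work lies.

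One caveat worth flagging: in the domain characterization, continuity of $v\mapsto\dir(u,v)$ with respect to $\|\cdot\|_1$ holds for \emph{every} $u\in D(\dir)$ by the sector condition, so as literally written it cannot single out $D(L)$ inside $D(\dir)$; the correct condition --- and the one your Riesz-representation remark actually requires in order to produce an element of $\mathfrak{H}$ rather than of the dual of $(D(\dir),\|\cdot\|_1)$ --- is continuity with respect to the norm $\|\cdot\|$ of $\mathfrak{H}$ itself. This is a typo in the paper's statement of the theorem which your write-up inherits verbatim; it does not affect your construction, since the identification $D(L)=G_\beta(\mathfrak{H})$ coming out of the resolvent argument is the right one.
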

\noindent According to the previous theorem, coercive, closed forms generate pairs of strongly
continuous, contractive semigroups. In order to be sure, however, that these semigroups are
associated to some Markov process we must be able to implement them by means of explicit
transition functions. The following result~\cite{rock} shows that a way to realize this program is
to deal with \emph{regular Dirichlet} forms, which are particular cases of coercive, closed forms.
The subsequent exposition could well be proposed for more general Hilbert spaces, but to settle
our notation from now on we will rather limit ourselves just to $\mathfrak{H}=\leb^2(\re^n,d\mu)$.
\begin{theorem}\label{th:dirimarkov}
 Take a regular Dirichlet form $(\dir,D(\dir))$ on $\leb^2(\re^n,d\mu)$, with
its associated strongly continuous semigroups $(T_t)_{t\geq0}$ and $(\tilde T_t)_{t\geq0}$: then
there are two time-homogeneous transition functions $p_t$ and $\tilde{p}_t$ on
$(\re^n,\bor(\re^n))$ such that $\mu$-a.s.
\begin{equation*}
[T_tf](x)=\int f(y)p_t(x,dy)\qquad\qquad
 [\tilde T_tf](x)=\int f(y)\tilde{p}_t(x,dy)
\end{equation*}
for every $t\geq 0$ and $f\in \leb^2(\re^n,d\mu)$.
\end{theorem}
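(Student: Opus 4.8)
The plan is to establish the existence of the transition functions $p_t$ and $\tilde{p}_t$ by exploiting the defining property of a \emph{regular} Dirichlet form: that the associated semigroups are \emph{Markovian}, i.e.\ they are sub-Markovian (order-preserving and contractive on $L^\infty$, so that $0\le f\le 1$ implies $0\le T_tf\le 1$ $\mu$-a.s.). First I would recall from Theorem~\ref{th:coercive_semigroup} that the regular Dirichlet form $(\dir,D(\dir))$ generates a pair of strongly continuous, contractive semigroups $(T_t)_{t\geq0}$ and $(\tilde T_t)_{t\geq0}$ with $T_t^\dag=\tilde T_t$, so that it suffices to construct $p_t$ for the forward semigroup; the backward kernel $\tilde{p}_t$ then follows by applying the identical argument to the dual form, which is again a regular Dirichlet form with generator $\tilde L$.

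The core step is to produce, for each fixed $t\geq0$ and each $x$, a genuine measure $p_t(x,\cdot)$ on $(\re^n,\bor(\re^n))$ representing the action of $T_t$. Here I would proceed as follows. For a fixed $t$, the map $f\mapsto [T_tf](x)$ is defined only $\mu$-a.e., so the main work is to select a \emph{good version} of the semigroup acting pointwise. The standard route is to invoke the Markovian property to show that $T_t$ admits an integral kernel: one fixes a countable dense family of functions in a suitable algebra (e.g.\ a core of $D(\dir)\cap C_0(\re^n)$, which exists precisely because the form is \emph{regular}), and uses the positivity and contractivity $0\le T_t\bm 1\le \bm 1$ together with the Riesz representation theorem to obtain, for $\mu$-almost every $x$, a positive linear functional on $C_0(\re^n)$, hence a sub-probability measure $p_t(x,\cdot)$. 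Regularity is exactly what guarantees that the form's domain contains enough continuous compactly supported functions for this representation to be carried out consistently, and for the resulting kernels to depend measurably on $x$.

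Next I would verify the two consistency requirements that promote the family $\{p_t(x,\cdot)\}$ to a genuine \emph{transition function}: the measurability of $x\mapsto p_t(x,B)$ for each Borel $B$, and the Chapman--Kolmogorov semigroup identity $p_{s+t}=p_s\circ p_t$. Measurability follows from the pointwise construction applied to indicators approximated by the continuous core. The Chapman--Kolmogorov relation is inherited from the semigroup law $T_{s+t}=T_sT_t$: one checks it first against the dense core, where $[T_{s+t}f](x)=[T_s(T_tf)](x)$ translates directly into the kernel composition, and then extends it to all bounded Borel $f$ by monotone-class and density arguments. Time-homogeneity is automatic, since the kernels are built from a single one-parameter semigroup indexed by the elapsed time $t$.

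The main obstacle I anticipate is the passage from the $\mu$-a.e.\ defined operators $T_t$ to genuinely pointwise-defined, measurable kernels valid \emph{simultaneously} in a way compatible with Chapman--Kolmogorov: a priori each $T_t$ is only defined up to $\mu$-null sets, and these exceptional sets could accumulate uncontrollably across the continuum of times $t$. The resolution, which is the technical heart of the Fukushima--Oshima--Takeda and Ma--R\"ockner theory cited as~\cite{rock}, is to use the \emph{quasi-continuous} versions of the semigroup together with capacity estimates and the regularity of the form to control the exceptional sets off a single \emph{properly exceptional} (capacity-zero) set, thereby obtaining a single consistent family of kernels. Since this construction is exactly the content of the monograph result invoked in the statement, I would cite it rather than reproduce the full capacity machinery, and simply assemble the pieces — Markovianity, Riesz representation on the regular core, measurability, and the Chapman--Kolmogorov extension — into the claimed existence of $p_t$ and $\tilde{p}_t$.
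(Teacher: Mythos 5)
Your outline is sound and matches exactly what the paper itself does: the paper offers no proof of Theorem~\ref{th:dirimarkov}, presenting it as a known result imported from the Ma--R\"ockner/Fukushima theory cited as~\cite{rock}, which is precisely the construction (sub-Markovianity, Riesz representation on the regular core, quasi-continuous versions and properly exceptional sets, Chapman--Kolmogorov from the semigroup law) that you sketch and then likewise delegate to that monograph. The one point worth keeping explicit is that in the non-symmetric setting the Dirichlet property must hold for both $T_t$ and $\tilde T_t$ (this is built into the Ma--R\"ockner definition), which is what licenses your ``apply the identical argument to the dual form'' step and is consistent with the paper's subsequent remark that the resulting kernels may a priori be only sub-Markovian.
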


\noindent By means of Theorem \ref{th:dirimarkov} we can then associate to a regular Dirichlet
form two Markov processes $(X_{t})_{t\geq0}$ and $(\tilde X_t)_{t\geq0}$ defined on $\re^n$ -- but
for an arbitrary initial distribution -- respectively by $p_t$ and $\tilde p_t$, and enjoying
several useful properties such as right continuity with left limit and strong Markov property (for
details see \cite{fukushima,rock,sharp}). The transition functions $p_t$ and $\tilde p _t$,
however, could in general be \emph{sub-Markovian}, namely we could have $p_t(x,\re^n)\leq 1$ for
some $x\in \re^n$. To avoid this it can be easily proved, by a general property of strongly
continuous semigroups~\cite{engel}, that if the constant function $u_1=1$ belongs to $D(L)$ and
$D(\tilde L)$, then we find $p_t(x,\re^n)=\tilde p_t(x,\re^n)=1$ for every $x\in \re^n$ if and
only if $Lu_1=\tilde Lu_1=0$. In this case, moreover, we also have that $\mu$ is an invariant
measure for both $p_t$ and $\tilde p_t$. Indeed, since $p_t$ and $\tilde p_t$ are \emph{in
duality} with respect to $\mu$, namely
\begin{equation}\label{eq:simmetriatransizioni}
\int\!\!\!\int f(x)g(y)\,p_t(x,dy)\mu(dx)=\int\!\!\!\int f(y)g(x)\,\tilde{p}_t(x,dy)\mu(dx)
\end{equation}
as we easily deduce from $\tilde T_t=T_t^{\dag}$, by taking $f(x)=1$ and $g(y)=\bm 1_B(y)$ for
$B\in \bor(\re^n)$, it is easy to prove that
\begin{equation*}
    \int p_t(x,B)\mu(dx)=\mu(B)
\end{equation*}
namely that $\mu$ is invariant. The same proof can be adapted to $\tilde p_t$.

Finally under special condition it is also possible to associate to $(\dir,D(\dir))$ a
\emph{single} bilateral Markov process $X=(X_t)_{t\in\re}$ obtained by sewing together
$(X_{t})_{t\geq0}$ and $(\tilde X_t)_{t\geq0}$. This occurs in particular when $\mu$ is an
invartiant probability measure. In this case in fact we first define the canonical process $X_t$
with $t\in\re$ and its distribution $\pro_\mu$ as the Kolmogorov extension of
\begin{eqnarray}\label{eq:pro}
    \pro_{\mu}(X_{t_1}\in B_1)&=&\mu(B_1)\nonumber\\
    \pro_{\mu}(X_{t_1}\in B_1,\ldots,X_{t_k}\in B_k) &=& \int_{B_1\times\ldots\times
B_k}\!\!\!p_{t_k-t_{k-1}}(x_{k-1},dx_{k})\ldots \\
          &&\qquad\qquad\qquad\qquad\ldots p_{t_2-t_1}(x_1,dx_2)\mu(dx_1)\nonumber
\end{eqnarray}
for $B_k\in\bor(\re^n)$, $k\in\bm{N}$, and $t_1\leq t_2\leq \ldots\leq t_k\leq\ldots\,$, and then
we show that $(X_t)_{t\in\re}$ is a Markov process with respect to $\pro_{\mu}$, having $p_t$ and
$\tilde p_t$ respectively as its forward and backward transition functions. Actually it is
straightforward to prove that  $(X_t)_{t\in\re}$ is a Markov process and that $p_t$ is the forward
transition function for it. As for $\tilde p_t$ we have instead to check that
\begin{equation}\label{backwtrans}
 \pro_{\mu}(X_{s-t}\in B|X_s=x)=\asp_{\mu}\{\bm 1_{B}(X_{s-t})|X_s=x\}=\tilde p_t(x,B)
\end{equation}
for every Borel set $B$, $s\in \re$ and $t\geq0$. To this effect it will be enough to remark that,
for every $B_1$ and $B_2$, from\refeq{eq:pro} and\refeq{eq:simmetriatransizioni} it is easy to
show that
\begin{equation*}
 \asp_{\mu}\{\bm 1_{B_2}(X_s)\bm 1_{B_1}(X_{s-t})\}
        =\asp_{\mu}\{\bm 1_{B_2}(X_s)\tilde p_t(X_s,B_1)\}
\end{equation*}
which proves\refeq{backwtrans}. It can be seen moreover~\cite{andrisani} that this unifying
procedure can be adopted even when the invariant measure is not a probability. As a consequence,
according to Theorem~\ref{th:dirimarkov}, the semigroup generators $L$ and $\tilde{L}$ on
$\leb^2(\re^n,d\mu)$ derived through~(\ref{eq:gen_adjoint}) and~(\ref{eq:gen_adjoint2}) from a
regular Dirichlet form $(\dir,D(\dir))$ can be considered as the forward and backward generators
of a single, bilateral Markov process $(X_t)_{t\in\re}$ when $Lu_1=\tilde{L}u_1=0$.

Let us conclude this survey with a few remarks about how to check that a bilinear form
$(\dir,D(\dir))$ actually is a regular Dirichlet form. We first recall that, if the space
$C_0^{\infty}(\re^n)$ of the infinitely derivable real functions with compact support is contained
in $D(\dir)$, it is possible to prove~\cite{rock} that for a symmetric Dirichlet form
$(\dir,D(\dir))$ on $\leb^2(\re^n,d\mu)$ the following \emph{Beurling-Deny formula} holds for
every $f,g\in C_0^{\infty}(\re^n)$:
\begin{eqnarray}
 \label{eq:simmform}
\dir(f,g)&=&\sum_{i,j=1}^n\int \partial_{i}f(x)\partial_{j}g(x)\mu^{ij}(dx)+\int f(x)g(x)k(dx)\nonumber\\
 & &\qquad\qquad\qquad+\int\!\!\!\int_{x\neq z}[f(x)-f(z)][g(x)-g(z)]J(dx,dz)
\end{eqnarray}
where $k(dx)$ is a positive Radon measure on $\re^n$ (\emph{killing measure}), $J(dx,dz)$ is a
symmetric, positive Radon measure defined on $\re^n\times\re^n$ for $x\neq z$ (\emph{jump
measure}) and such that for every $f\in C_0^{\infty}(\re^n)$
\begin{equation}\label{eq:verificadiri1}
 \int\!\!\int_{x\neq z}|f(x)-f(z)|^2J(dx,dz)<\infty
\end{equation}
and $\mu^{ij}(dx)$ for $i,j=1,\ldots,n$ are positive Radon measures
on $\re^n$ (\emph{diffusion measures}) such that for every compact
subset $B\subseteq\re^n$ and $b\in\re^n$ it results
\begin{equation}\label{eq:verificadiri2}
 \mu^{ij}(B)=\mu^{ji}(B)\qquad\qquad\sum_{i,j=1}^{n}b_ib_j\mu^{ij}(B)\geq 0
\end{equation}
For us however it is important to recall that, if our form is also a \emph{closable}
one~\cite{rock}, it is possible to prove a sort of reverse statement so that we will practically
be able use\refeq{eq:simmform} to check that $\dir$ is a regular Dirichlet form.
\begin{proposition}\label{pr:closable-diri}
 Take a closable, bilinear form
$(\dir,D(\dir))$ on $\leb^2(\re^n,d\mu)$ with $D(\dir)=
C_0^{\infty}(\re^n)$: if $\dir$ satisfies the Beurling-Deny
formula\refeq{eq:simmform}, then its closure
$(\bar{\dir},D(\bar{\dir}))$ is a regular Dirichlet form.
\end{proposition}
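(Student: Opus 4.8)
The plan is to verify, for the closure $(\bar\dir,D(\bar\dir))$, the three defining features of a regular Dirichlet form: closedness, the Markovian (contraction) property, and regularity. Closedness is immediate, since by hypothesis $\dir$ is closable and its closure is by definition the smallest closed extension; moreover $C_0^\infty(\re^n)$ is then a core for $\bar\dir$ by construction. Symmetry comes for free as well, because the right-hand side of the Beurling-Deny formula\refeq{eq:simmform} is manifestly symmetric in $f$ and $g$, so $\dir$, and hence $\bar\dir$, is a symmetric form.

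The heart of the argument is the Markovian property, and here I would exploit the Beurling-Deny representation term by term. Fix $f\in C_0^\infty(\re^n)$ and let $\phi$ be a smooth, $1$-Lipschitz approximation of the unit contraction $t\mapsto(0\vee t)\wedge 1$ with $\phi(0)=0$; then $\phi\circ f\in C_0^\infty(\re^n)=D(\dir)$. For each of the three pieces of\refeq{eq:simmform} one checks that composition with $\phi$ does not increase the form: in the diffusion term the positive semidefiniteness\refeq{eq:verificadiri2} together with $|\phi'|\le 1$ (so that $\partial_i(\phi\circ f)=\phi'(f)\,\partial_i f$ and the nonnegative integrand is multiplied by $\phi'(f)^2\le 1$) yields a pointwise decrease; in the killing term $|\phi\circ f|\le|f|$ suffices; and in the jump term the Lipschitz bound $|\phi(f(x))-\phi(f(z))|\le|f(x)-f(z)|$ does the job. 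Letting $\phi$ tend to the exact unit contraction $f^{\#}=(0\vee f)\wedge 1$ and invoking the lower semicontinuity of the closed form $\bar\dir$, one transports the inequality $\bar\dir(f^{\#},f^{\#})\le\bar\dir(f,f)$ first to the core and then, by approximating an arbitrary $u\in D(\bar\dir)$ in the $\|\cdot\|_1$ norm, to the whole domain. This establishes that $\bar\dir$ is Markovian, hence a Dirichlet form.

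Regularity then follows almost at once from the hypothesis $D(\dir)=C_0^\infty(\re^n)$. Indeed $C_0^\infty(\re^n)$ is contained in $D(\bar\dir)\cap C_c(\re^n)$, it is a core for $\bar\dir$ by the very construction of the closure (hence dense in $D(\bar\dir)$ with respect to $\|\cdot\|_1$), and it is dense in $C_c(\re^n)$ in the uniform norm by a standard mollification argument. These are precisely the two density requirements in the definition of a regular Dirichlet form.

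The main obstacle I expect is the extension of the contraction inequality from the core to the full closure: the unit contraction $f^{\#}$ of a smooth, compactly supported $f$ is in general neither smooth nor in $D(\dir)$, so one cannot argue directly on $C_0^\infty(\re^n)$. The delicate point is therefore the double limiting procedure — smoothing the contraction via $\phi$ while simultaneously approximating $u\in D(\bar\dir)$ by core elements — which must be controlled using the lower semicontinuity and closedness of $\bar\dir$, so as to guarantee that the limiting contraction $u^{\#}$ still lies in $D(\bar\dir)$ and continues to satisfy $\bar\dir(u^{\#},u^{\#})\le\bar\dir(u,u)$.
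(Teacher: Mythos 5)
The paper offers no proof of this proposition --- it is quoted from the Ma--R\"ockner monograph cited as its source --- and your argument is precisely the standard one found there and in Fukushima: symmetry and closedness are immediate, the Markov property is verified termwise on the core $C_0^{\infty}(\re^n)$ using smooth $1$-Lipschitz truncations $\phi$ with $\phi(0)=0$ (positive semidefiniteness of $(\mu^{ij})$ for the diffusion part, $|\phi\circ f|\leq|f|$ for the killing part, the Lipschitz bound for the jump part), and regularity follows because $C_0^{\infty}(\re^n)$ is a core for $\bar{\dir}$ by construction and uniformly dense in the continuous compactly supported functions. Your proposal is correct and correctly isolates the one delicate step, namely transferring the contraction inequality from smooth truncations on the core to the exact unit contraction on all of $D(\bar{\dir})$, which is handled exactly as you say by a Banach--Saks/Ces\`aro-mean argument combined with the lower semicontinuity of the closed form.
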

\noindent Remark moreover that closability can often be verified in a very simple way, as shown by
the following result~\cite{rock}
\begin{proposition} \label{pr:chiudibile} Let $(L,D(L))$ be a symmetric, negative
definite linear operator on $\mathfrak{H}$, and define the bilinear form $\dir(u,v):=-\langle
Lu,v\rangle$ with $D(\dir):=D(L)$. Then $(\dir,D(\dir))$ is closable.
\end{proposition}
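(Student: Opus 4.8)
The plan is to verify closability directly from its definition for symmetric, positive forms. Since $L$ is negative definite we have $\dir(u,u)=-\langle Lu,u\rangle\geq 0$, and since $L$ is symmetric, $\dir(u,v)=-\langle Lu,v\rangle=-\langle u,Lv\rangle$ is a symmetric, positive semidefinite bilinear form on $D(L)=D(\dir)$. For such a form it suffices to show that whenever a sequence $u_n\in D(L)$ satisfies $u_n\to 0$ in $\mathfrak H$ and is Cauchy for the form, that is $\dir(u_n-u_m,u_n-u_m)\to 0$ as $n,m\to\infty$, then necessarily $\dir(u_n,u_n)\to 0$. Establishing this implication is exactly the content of closability, since it guarantees that the form admits a well-defined closed extension.

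First I would exploit the symmetry of $L$ to transfer the operator onto the fixed argument. For every fixed $m$ one has $\dir(u_n,u_m)=-\langle u_n,Lu_m\rangle$, and since $Lu_m$ is a fixed vector of $\mathfrak H$ while $u_n\to 0$, the scalar product tends to zero; hence $\lim_n\dir(u_n,u_m)=0$ for each fixed $m$. This is the step where the hypotheses on $L$ genuinely enter: without symmetry we could not move $L$ off the vanishing factor $u_n$.

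Next I would record that the form-Cauchy property forces $\sup_n\dir(u_n,u_n)=:M^2<\infty$; this follows from the triangle inequality for the seminorm $\dir(\cdot,\cdot)^{1/2}$, whose validity rests in turn on the Cauchy--Schwarz inequality available for any positive semidefinite form. Then, writing $\dir(u_n,u_n)=\dir(u_n,u_n-u_m)+\dir(u_n,u_m)$ and bounding the first term by Cauchy--Schwarz as $\dir(u_n,u_n)^{1/2}\dir(u_n-u_m,u_n-u_m)^{1/2}\leq M\,\dir(u_n-u_m,u_n-u_m)^{1/2}$, I would fix $\epsilon>0$, choose $m$ large enough that the form-Cauchy tail falls below $\epsilon$, let $n\to\infty$ so that the second term vanishes by the previous step, and conclude $\limsup_n\dir(u_n,u_n)\leq M\sqrt{\epsilon}$. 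As $\epsilon$ is arbitrary and $\dir(u_n,u_n)\geq 0$, this yields $\dir(u_n,u_n)\to 0$, which is closability.

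The main obstacle is keeping the order of the two limits correct: one must take $n\to\infty$ while $m$ stays fixed, and only afterwards let $\epsilon$ (equivalently $m$) tend to infinity, so that the vanishing of $\dir(u_n,u_m)$ can be invoked before the uniform bound $M$ is cashed in. The symmetry and negative-definiteness of $L$ are used only to secure, respectively, the transfer identity and the Cauchy--Schwarz estimate; the remainder is the standard diagonal bookkeeping.
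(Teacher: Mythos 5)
Your argument is correct and complete: positivity and symmetry of the form follow from the hypotheses on $L$, the transfer identity $\dir(u_n,u_m)=-\langle u_n,Lu_m\rangle\to 0$ is the key use of symmetry, and the Cauchy--Schwarz bookkeeping with the limits taken in the right order ($n\to\infty$ first, $m$ fixed) correctly yields $\dir(u_n,u_n)\to 0$, which is the definition of closability for a positive symmetric form. The paper itself gives no proof of this proposition, merely citing Ma--R\"ockner, and your proof is precisely the standard argument found there (and in the proof of the Friedrichs extension theorem), so there is nothing to compare beyond noting that you have supplied the omitted details correctly.
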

\noindent Taken together the Theorem~\ref{th:dirimarkov} and the
Proposition~\ref{pr:closable-diri} imply that when we want to know if there is a Markov process
associated to a given generator $L$, essentially we must first consider the bilinear
forms\refeq{eq:gen_adjoint} and\refeq{eq:gen_adjoint2}, and then check that the Beurling-Deny
formula\refeq{eq:simmform} holds.

\section{L\'evy Processes}\label{se:articolo2}

The processes with independent increments constitute an important class of Markov processes, and
among them the L\évy processes~\cite{sato,applebaum,protter,cufaro2} are of particular relevance
and are today widely applied in a variety of fields~\cite{applications,cont}; they have also been
introduced in the larger context of quantum probability~\cite{accardi}. Actually these processes
are also time-homogeneous and hence they can be identified by means of a transition function $p_t$
only. They are stochastically continuous with independent and stationary increments, and they
include many families of well known processes as the Poisson, the Cauchy, the Student, the
Variance-Gamma and, of course, the Wiener process which is the unique Gaussian process, with
$a.s.$ continuous paths. The L\'evy processes are Feller processes and -- if integrable with zero
expectation -- they are also martingales. In general, however, they always are semi-martingales,
so that in any case they can be used as integrators in \ito\ integrals~\cite{protter}. It is
finally important to remark that between the L\'evy processes and the infinite divisible
distributions~\cite{sato} there is a one-to-one correspondence.

Given a filtered, complete probability space $(\Omega,\alg,\filta,\pro)$, let us consider a L\évy
process $(Z_{t})_{t\geq0}$ with L\évy measure $\ell$.
It is well known then that the following \emph{L\'evy-Khintchin formula}~\cite{sato} holds
\begin{theorem}\label{th:levi_khintchine}
Let $(Z_t)_{t\geq0}$ a L\'evy process. Then its logarithmic
characteristic $\eta(u):=\log\asp\{e^{iu\cdot Z_1}\}$ satisfies
\begin{equation} \label{eq:levi_khintchine}
 \eta(u)=-\frac{1}{2}u\cdot Au+i\gamma\cdot u+\int_{y\neq0}\left[e^{iu\cdot
y}-1-i(u\cdot y)\,\bm 1_{B_1}(y)\right]\ell(dy)
\end{equation}
where $A=\|\alpha_{ij}\|$ is a symmetric non negative-definite $n\times n$ matrix,
$\gamma\in\re^n$, $B_1=\{y\in\re^n\,:\,|y|\leq1\}$ and $\ell(dy)$ is a L\'evy measure. This
representation of $\eta$ by means of the triplet $(A,\ell,\gamma)$ is unique. Viceversa, if $A$ is
a symmetric non negative-definite $n\times n$ matrix, $\gamma\in\re^n$ and $\ell$ is a L\'evy
measure, then (\ref{eq:levi_khintchine}) is the logarithmic characteristic associated to a unique
(in distribution) L\'evy process $(Z_t)_{t\geq0}$.
\end{theorem}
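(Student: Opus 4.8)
The plan is to reduce the statement to the classical representation of infinitely divisible laws and then, for the converse, to build the process explicitly. First I would fix $u\in\re^n$ and set $\chf_t(u):=\asp\{e^{iu\cdot Z_t}\}$. Since $\procl$ has stationary and independent increments, for $s,t\geq0$ one has $\chf_{s+t}(u)=\chf_s(u)\chf_t(u)$, and stochastic continuity forces $t\mapsto\chf_t(u)$ to be continuous; the only continuous multiplicative solutions are $\chf_t(u)=e^{t\lch(u)}$ with $\lch(u)=\log\chf_1(u)$, so it remains only to identify the form of $\lch$. Writing $Z_1$ as the sum of the $n$ i.i.d.\ increments $Z_{k/n}-Z_{(k-1)/n}$ shows that $\chf_1=(\chf_{1/n})^n$ for every $n$, i.e.\ $Z_1$ is infinitely divisible; thus the whole problem collapses to proving the L\'evy--Khintchin representation for the characteristic function of an infinitely divisible law.

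For that representation I would first show that $\chf_1$ never vanishes: its modulus satisfies $|\chf_{1/n}|^2=|\chf_1|^{2/n}$, which together with $\chf_{1/n}\to1$ (stochastic continuity) prevents $\chf_1$ from vanishing, so that $\lch=\log\chf_1$ is a well defined continuous function with $\lch(0)=0$. Since $\chf_{1/n}=e^{\lch/n}$, I would use the expansion $n(\chf_{1/n}(u)-1)\to\lch(u)$ and rewrite, denoting by $\mu_{1/n}$ the law of $Z_{1/n}$,
\begin{equation*}
 n\bigl(\chf_{1/n}(u)-1\bigr)=\int_{\re^n}\bigl(e^{iu\cdot y}-1\bigr)\,n\,\mu_{1/n}(dy).
\end{equation*}
The decisive technical step is to introduce the finite measures $\sigma_n(dy):=n\,\frac{|y|^2}{1+|y|^2}\,\mu_{1/n}(dy)$, prove that their total masses stay bounded and that the family is tight, and extract by Helly's selection theorem a subsequence converging vaguely to a finite measure $\sigma$. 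The mass of $\sigma$ at the origin yields the Gaussian matrix $A$, its restriction to $\{y\neq0\}$ produces $\ell(dy):=\frac{1+|y|^2}{|y|^2}\,\sigma(dy)$, which automatically satisfies the integrability $\int(|y|^2\wedge1)\,\ell(dy)<\infty$, and the surviving linear term fixes the drift $\gamma$; rearranging with the truncation $\bm 1_{B_1}(y)$ gives exactly\refeq{eq:levi_khintchine}. Uniqueness of the triplet $(A,\ell,\gamma)$ I would obtain by recovering each ingredient from $\lch$ itself: the quadratic growth at infinity isolates $A$, a second-difference/Fourier-inversion argument recovers $\ell$ on $\{y\neq0\}$, and $\gamma$ is then forced.

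For the converse I would construct the process directly as a L\'evy--It\^o synthesis: a Brownian motion with covariance matrix $A$ and drift $\gamma$, taken independent of a pure-jump part built from a Poisson random measure $N$ on $[0,\infty)\times(\re^n\setminus\{0\})$ with intensity $dt\,\ell(dy)$. The contribution of the large jumps ($|y|>1$) is an ordinary compound Poisson process, since $\ell$ restricted to $\{|y|>1\}$ is finite, whereas the small jumps require the compensated integral of $y$ against $N(dt,dy)-dt\,\ell(dy)$ over $\{0<|y|\leq1\}$, whose convergence holds in $L^2$ precisely because $\int(|y|^2\wedge1)\,\ell(dy)<\infty$. Adding the independent pieces produces a process with stationary independent increments and stochastically continuous paths; its time-$1$ characteristic function equals $e^{\lch(u)}$ by independence together with the exponential formula for Poisson integrals, and uniqueness in distribution follows because\refeq{eq:levi_khintchine}, through $\chf_t=e^{t\lch}$, pins down every finite-dimensional characteristic function. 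I expect the main obstacle to be the compactness and tightness argument for the $\sigma_n$ in the direct part --- controlling their mass simultaneously near the origin and at infinity and then verifying that the vague limit is a bona fide L\'evy measure --- which is the analytic crux mirrored, in the converse, by the $L^2$ convergence of the compensated small-jump integral.
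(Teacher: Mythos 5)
The paper does not actually prove this theorem: it is stated as the classical L\'evy--Khintchin formula and attributed to the literature (Sato's monograph), so there is no internal argument to compare yours against. Your outline reproduces, correctly in its main lines, the standard textbook proof: the semigroup identity $\chf_{s+t}=\chf_s\chf_t$ plus stochastic continuity giving $\chf_t=e^{t\lch}$, infinite divisibility of $\pro_{Z_1}$, non-vanishing of $\chf_1$, the limit $n(\chf_{1/n}-1)\to\lch$, compactness of the auxiliary measures $\sigma_n$, and the L\'evy--It\^o synthesis for the converse. Two places where your sketch hides the real analytic work deserve flagging. First, boundedness and tightness of the $\sigma_n$ is not automatic from what you wrote: one must in general first re-center the laws $\mu_{1/n}$ (subtracting something like $n\int y/(1+|y|^2)\,\mu_{1/n}(dy)$ inside the exponent) before the total masses $\sigma_n(\re^n)$ can be controlled, the standard device being to bound them by integrating $-\Re\,n\bigl(\chf_{1/n}(u)-1\bigr)$ over a small ball in $u$ and using the uniform convergence to $-\Re\,\lch$; without this centering the family need not be tight. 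Second, recovering the full matrix $A$ (rather than a scalar) from ``the mass of $\sigma$ at the origin'' requires tracking the directional second moments $n\int y_iy_j/(1+|y|^2)\,\mu_{1/n}(dy)$ near $0$ along the convergent subsequence. With those caveats your plan is sound and is essentially the argument carried out in the reference the paper cites.
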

\noindent The triplet $(A,\ell,\gamma)$ is usually called the \emph{generating triplet} of
$\procl$. The laws $\pro_{Z_1}$ with characteristic functions
$\varphi(u)=e^{\eta(u)}=\asp\{e^{iu\cdot Z_1}\}$ turn out to be \emph{infinitely divisible
(\id)}~\cite{sato}, and as a consequence $\varphi^t$ still is the characteristic function of some
other \id\ law for every $t>0$. More precisely, if $\varphi_t$ is the characteristic function of
$\pro_{Z_t}$ for $t\geq0$, namely $\varphi_t(u)=\asp\{e^{iu\cdot Z_t}\}$, then it is easy to show
that $\varphi_t=\varphi^t$. This result provides a one-to-one correspondence between the L\évy
processes and the \id\ laws~\cite{sato,cufaro2} in such a way that every L\'evy process is in fact
uniquely determined by its \id\ distribution at a unique instant, usually at $t=1$.

We conclude this section with the explicit expressions of the infinitesimal generator
$(L_0,D(L_0))$ and of the Dirichlet form $(\dir_0,D(\dir_0))$ associated to a symmetric L\'evy
process $Z_t$ in $\leb^{2}(\re^{n},dx)$. The generator $L_0$ is a pseudo-differential operator
with symbol $\eta$~\cite{applebaum,protter} namely
\begin{equation}
 [L_0f](x)=\frac{-1}{(2\pi)^n}\int
e^{iux}\hat{f}(u)\eta(u)du \label{eq:levval}
\end{equation}
where  $\hat{f}$ is the Fourier transform of $f$, and $D(L_0)$ is
the set of the $f\in \leb^{2}(\re^{n},dx)$ such that
\begin{equation}\label{eq:levdom}
    \int|\hat{f}(u)|^{2}|\eta(u)|^{2}du<\infty
\end{equation}
From\refeq{eq:levdom} it can be proved that actually the Schwarz space $\mathcal{S}(\re^{n})$ is a
subset of $D(L_0)$, and when $f\in\mathcal{S}(\re^{n})$ the infinitesimal generator of $Z_t$ takes
the more explicit form
\begin{equation}\label{eq:levval2}
 [L_0f](x)=\frac{1}{2}\nabla\cdot A\nabla f(x)+\int_{y\neq0}[\delta_y^2f](x)\,\ell(dy)
\end{equation}
where $A=\|\alpha_{ij}\|$ is the symmetric, positive definite matrix, and $\ell$ is the Levy
measure of the generating triplet of $Z_t$, and we adopted the shorthand notations
\begin{eqnarray*}
 [\delta_yf](x)&:=&f(x+y)-f(x)\cr
 [\delta_y^2f](x)&:=&f(x+y)-f(x)-y\cdot\nabla f(x)\bm1_{B_1}(y)
\end{eqnarray*}
with $B_1=\{y\in\re^n:\,|y|\leq1\}$. It is also easy to see then
that
\begin{eqnarray}
  \delta_y (fg)&=&g\delta_y f+f\delta_y g+\delta_yf\delta_y g\label{eq:propr_delta}\\
 \delta_y^2(fg)&=& g
 \delta_y^2f+f\delta_y^2g+\delta_yf\delta_yg\label{eq:provalemma0}
\end{eqnarray}
If the L\évy process is also rotationally invariant then we have $\alpha_{ij}=\alpha\delta_{ij}$
and\refeq{eq:levval2} is reduced to
\begin{equation}\label{eq:levval3}
 [L_0f](x)=\frac{\alpha}{2}\,\nabla^2 f(x)+\int_{y\neq0}[\delta_y^2f](x)\,\ell(dy)
\end{equation}
As for the bilinear form associated to our symmetric L\'evy process
$Z_t$ with generator $(L_0,D(L_0))$, namely
\begin{equation*}
    \dir_0(f,g)=-\langle L_0f,g\rangle=-\int g(x)[L_0f](x)\,dx
\end{equation*}
we have \cite{fukushima} on $D(L_0)$
\begin{equation} \label{eq:levdirval}
 \dir_0(f,g) = -\int \nabla g(x)\cdot A\nabla f(x) \,dx-\frac{1}{2}\!\int\!\!
\int_{y\neq0}[\delta_yf](x)[\delta_yg](x)\,\ell(dy)dx
\end{equation}
This last expression can also be extended to a $D(\dir_0)\supseteq
D(L_0)$, the set of the $f\in \leb^{2}(\re^{n},dx)$ such that
\begin{equation}\label{eq:levdirdom}
 \int \nabla f(x)\cdot A\nabla f(x) \,dx+\int\!\!\int_{y\neq0}\big|[\delta_yf](x)\big|^{2}\,\ell(dy)dx<\infty
\end{equation}

\section{Doob transformations}\label{se:articolo4}

We  turn now to the discussion of the association of a generalized Schr\"odinger equations to our
Markov processes. Let $(X_{t})_{t\in\re}$ be a time-homogeneous, bilateral Markov process with
infinitesimal forward and backward generators $(L,D(L))$ and $(\tilde L,D(\tilde L))$. We suppose
that
\begin{enumerate}
  \item\label{hp1} $X_{t}$ has an $a.c.$\ invariant measure
  $\mu(dx)=\rho(x)dx$;
    \item $\rho(x)>0$ $\;a.s.$\ with respect to the Lebesgue measure, so that $\mu(dx)$ is equivalent to the Lebesgue
    measure;
    \item\label{hp3} the set $D(L)\cap D(\tilde L)$ is dense in
    $\leb^{2}(\re^{n},d\mu)$.
\end{enumerate}
As already stated in the Section~\ref{se:articolo3} the invariant
measure $\mu$ is not necessarily required to be a probability
measure: it will be made clear in a few subsequent examples about
the plane waves that we will indeed consider also cases where the
invariant measure is rather $\sigma$-finite, as the Lebesgue measure
on $\re^n$. From our hypothesis~\ref{hp3} it also
follows~\cite{engel} that both $(T_{t})_{t\geq0}$ and
$(\tilde{T}_{t})_{t\geq0}$ are \emph{strongly continuos} semigroups
in $\leb^{2}(\re^{n},d\mu)$, and that
\begin{equation}\label{eq:aggiungi2}
 \tilde L=L^{\dag}
\end{equation}
If then $Y_t$ is the space-time version of $X_{t}$, its
infinitesimal forward and backward generators $L^Y$ and
$\tilde{L}^Y$ are defined on $\leb^{2}(\re^{n+1},d\mu\, dt)$, and it
can be easily shown that
\begin{equation}\label{eq:aggiungi3}
    L^Y=L+\partial_t\qquad\quad \tilde{L}^Y=\tilde{L}-\partial_t
\end{equation}
with $D(L^Y)= D(L)\otimes \mathcal{H}^{1}$ and $D(\tilde{L}^Y)=
D(\tilde{L})\otimes \mathcal{H}^{1}$, where $\mathcal{H}^{1}$ is the
space of the absolutely continuous functions of $t$ with a square
integrable derivative. The result\refeq{eq:aggiungi3} is apparently
connected to the fact that the infinitesimal generator of the
translation semigroup -- namely the semigroup of the degenerate
Markov process $\tau_{t}$ -- is the time derivative~\cite{engel}.
Remark that if the hypothesis~\ref{hp3}.\ is satisfied by $L$ and
$\tilde L$, then it will hold also for $L^Y$ and $\tilde L^Y$, and
we will have
\begin{equation}
\label{eq:aggio} \tilde L^Y=(L^Y)^{\dag}
\end{equation}
All our generators $L,\tilde{L},L^Y$ and $\tilde L^Y$ can now be
also naturally extended to the respective spaces
$\leb_{\bm{C}}^{2}(\re^{n},d\mu)$ and
$\leb_{\bm{C}}^{2}(\re^{n+1},d\mu\,dt)$ of the complex valued
functions, where they are still densely defined and closed, while
the relations\refeq{eq:aggiungi2} and\refeq{eq:aggio} are preserved.
Moreover, in order to make sure that the Hamiltonian operators that
we will introduce later are bounded from below, we also add a fourth
hypothesis:
\begin{enumerate}
  \item[4.]\label{hp4} $\exists\, C\in\re$ such that for every $\phi$ in $D(L)$
  \begin{equation}
  \label{eq:55bis}
   -\Re\langle\phi,L\phi\rangle_\mu+\Im\langle\phi,L\phi\rangle_\mu\geq C\left\|\phi\right\|_\mu
   \end{equation}
\end{enumerate}
Remark however that if $X_{t}$ is a $\mu$-symmetric process, namely if $L$ turns out to be
self-adjoint in $\leb^2_{\bm{C}}(\re^n,d\mu)$, then $\Im\langle\phi,L\phi\rangle_\mu$ vanishes and
the condition (\ref{eq:55bis}) becomes
\begin{equation*}
-\langle\phi,L\phi\rangle_\mu\geq C\left\|\phi\right\|_\mu
\end{equation*}
which is automatically satisfied because $L$ is the generator of a
contraction semigroup so that~\cite{engel}
$\langle\phi,L\phi\rangle_\mu\leq0$, and it will be enough to take
$C=0$. As a consequence the hypothesis 4 is always satisfied by
$\mu$-symmetric processes. The relevance of this fourth condition
will be made clear in the following, and it lies mainly in the fact
that for a given symmetric, bounded from below operator
$(H_0,D(H_0))$ on a Hilbert space $\mathfrak{H}$, there always
exists~\cite{reed} a \emph{self-adjoint} operator $(H,D(H))$ bounded
from below such that $D(H_0)\subset D(H)$, and that for every $v\in
D(H_0)$, $Hv=H_0v$. This operator $H$ is usually called the
\emph{Friedrichs extension} of $H_0$.
\begin{proposition}
If $X_{t}$ is a time-homogeneous Markov process satisfying the hypotheses~1-4, then the operator
in $\leb_{\bm{C}}^{2}(\re^{n+1},d\mu\,dt)$
\begin{equation}
\label{eq:6} K^Y:=-\frac{\tilde L^Y+iL^Y}{i+1}=-\frac{L^Y+\tilde L^Y}{2}+\frac{L^Y-\tilde L^Y}{2i}
\end{equation}
defined on $D(K^Y):=D(L^Y)\cap D(\tilde L^Y)$, is symmetric.
\end{proposition}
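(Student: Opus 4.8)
The plan is to show that $K^Y$ equals its own adjoint on the common domain $D(K^Y) = D(L^Y) \cap D(\tilde L^Y)$, i.e.\ that $\langle K^Y u, v\rangle_\mu = \langle u, K^Y v\rangle_\mu$ for all $u, v \in D(K^Y)$. The natural starting point is to use the second equality in\refeq{eq:6}, writing $K^Y$ as the sum of the symmetric and antisymmetric parts of $-L^Y$, namely
\begin{equation*}
 K^Y = -\frac{L^Y + \tilde L^Y}{2} + \frac{L^Y - \tilde L^Y}{2i}.
\end{equation*}
Since $K^Y$ is densely defined (by hypothesis\,3 extended to $L^Y, \tilde L^Y$ as noted before\refeq{eq:aggio}), it suffices to verify that each of these two pieces is symmetric on $D(K^Y)$, using the duality relation\refeq{eq:aggio}, that is $\tilde L^Y = (L^Y)^\dag$.

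First I would treat the real part $S := -\tfrac12(L^Y + \tilde L^Y)$. For $u, v \in D(K^Y) \subseteq D(L^Y) \cap D(\tilde L^Y)$, using $\tilde L^Y = (L^Y)^\dag$ we get $\langle L^Y u, v\rangle_\mu = \langle u, \tilde L^Y v\rangle_\mu$ and $\langle \tilde L^Y u, v\rangle_\mu = \langle u, L^Y v\rangle_\mu$. Adding these and dividing by $-2$ shows immediately that $\langle S u, v\rangle_\mu = \langle u, S v\rangle_\mu$, so $S$ is symmetric. Next, for the imaginary part $T := \tfrac{1}{2i}(L^Y - \tilde L^Y)$, the key point is that the scalar factor $1/(2i)$ is purely imaginary, so taking it across the inner product introduces a sign change. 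Using the same two duality identities, $\langle (L^Y - \tilde L^Y)u, v\rangle_\mu = \langle u, (\tilde L^Y - L^Y)v\rangle_\mu = -\langle u, (L^Y - \tilde L^Y)v\rangle_\mu$; then dividing the left side by $2i$ and moving the conjugate of $1/(2i) = -1/(2i)$ onto the right compensates exactly for that sign, yielding $\langle T u, v\rangle_\mu = \langle u, T v\rangle_\mu$. Hence $T$ is symmetric as well, and $K^Y = S + T$ is symmetric.

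The main obstacle, and the step I would be most careful about, is domain bookkeeping: every manipulation above presupposes that $u$ and $v$ lie in a domain on which both $L^Y$ and $\tilde L^Y$ act and on which the adjoint relation\refeq{eq:aggio} may legitimately be applied. This is exactly why $K^Y$ is defined on the intersection $D(L^Y) \cap D(\tilde L^Y)$ rather than on either factor alone, and why hypothesis\,3 (inherited by $L^Y, \tilde L^Y$) guaranteeing density of this intersection is needed for \emph{symmetric} to be a meaningful notion. I would also note that hypothesis\,4 plays no role in mere symmetry — it is reserved for the later claim of boundedness from below and the ensuing Friedrichs extension — so I would not invoke\refeq{eq:55bis} here. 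The computation itself is routine once the duality $\tilde L^Y = (L^Y)^\dag$ is in hand; the only genuine subtlety is remembering that the antisymmetric combination becomes symmetric precisely because it is divided by the imaginary unit, the familiar device by which a skew-adjoint operator is turned into a self-adjoint one.
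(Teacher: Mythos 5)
Your proof is correct and follows essentially the same route as the paper's (very terse) argument: density of $D(K^Y)=D(L^Y)\cap D(\tilde L^Y)$ from hypothesis~3 and symmetry from the duality $\tilde L^Y=(L^Y)^\dag$, which you simply spell out by checking the symmetric part and the skew part divided by $i$ separately. Your observation that hypothesis~4 is not needed here also matches the paper, which reserves it for the boundedness from below of $H_0$.
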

\begin{proof}
From the hypothesis~\ref{hp3}.\ we deduce that $K^Y$ is densely
defined, while the symmetry easily follows from\refeq{eq:aggio}.
\end{proof}\\
We can also introduce in $\leb_{\bm{C}}^{2}(\re^{n},d\mu)$ the reduced operator
\begin{equation}\label{k}
 K:=-\frac{\tilde L+iL}{i+1}=-\frac{L+\tilde L}{2}+\frac{L-\tilde L}{2i}
\end{equation}
defined on $D(K):=D(L)\cap D(\tilde L)$, and prove in a similar way
that it is symmetric. Remark how the existence of both a forward and
a backward generator is instrumental here in the definition of our
symmetric operators $K$ and $K^Y$. It should also be said, however,
that starting from our generators we could define several different
symmetric operators, our present choice being dictated mainly by an
analogy with the Gaussian case, and by our interest in preserving
the presence of the time derivatives $\partial_t$ in the final
operators.

Given now an arbitrary real function $S(x)$ (a change in it would
simply be tantamount to a gauge transformation) and a constant
$E\in\re$, we first define the \emph{wave functions}
\begin{equation}\label{wavefunct}
\psi(x):=\sqrt{\rho(x)}\,e^{iS(x)}\quad\qquad\Psi(x,t):=\psi(x)\,e^{-iEt}
\end{equation}
and then we remark that the operators
\begin{eqnarray}\label{eq:tipo-doob}
U_{\Psi}&:&f\in \leb^{2}_{\bm{C}}(\re^{n+1},d\mu dt)\longrightarrow f\Psi\in
\leb^{2}_{\bm{C}}(\re^{n+1},dxdt)\\
U_{\psi}&:&f\in \leb^{2}_{\bm{C}}(\re^{n},d\mu)\longrightarrow
f\psi\in \leb^{2}_{\bm{C}}(\re^{n},dx)\nonumber
\end{eqnarray}
are unitary with
\begin{equation*}
U_{\Psi}^\dag=U_{\Psi}^{-1}=U_{\frac{1}{\Psi}}\qquad\qquad
U_{\psi}^\dag=U_{\psi}^{-1}=U_{\frac{1}{\psi}}
\end{equation*}
By means of these we can now introduce the operators
\begin{equation}\label{eq:aggiungi5}
 K_\Psi^Y=U_{\Psi}K^YU_{\Psi}^{-1}\qquad\qquad K_\psi=U_{\psi}KU_{\psi}^{-1}
\end{equation}
acting respectively on $\leb^{2}_{\bm{C}}(\re^{n+1},dxdt)$ and
$\leb^{2}_{\bm{C}}(\re^{n},dx)$ with $D(K^Y_\Psi)=U_{\Psi}D(K^Y)$
and $D(K_\psi)=U_{\psi}D(K)$. These unitary transformations are
reminiscent of the well known \emph{Doob transformation}
\cite{sharp,doob,yor} which is applied to the infinitesimal
generators $L$ of Markov processes for a real, positive $\Psi$ in
the domain of $L$ with $L\Psi=0$. Our transformation could also be
defined in a more general way~\cite{sharp}, but in
fact\refeq{eq:aggiungi5} turns out to be well suited to our purposes
so that we will continue to call it Doob transformation, while
$K^Y_\Psi$ and $K_\psi$ will be called the Doob transforms of $K^Y$
and $K$.

\begin{proposition}\label{prop}
The operator $K^Y_\Psi$ can be written as
\begin{equation*}
K^Y_\Psi= H_0\otimes I_t-iI_0\otimes \partial_t
\end{equation*}
where $I_0$ and $I_t$ are the identity operators respectively on the $x$ and $t$ variables, while
\begin{equation}\label{eq:hamiltoniano}
 H_0=K_\psi+E
\end{equation}
turns out to be a symmetric and bounded from below operator in $\leb^{2}_{\bm{C}}(\re^{n},dx)$. We
also have that $\psi$ is an eigenvector with eigenvalue $E$ of the Friedrichs extension $H$ of
$H_0$
\begin{equation}\label{statschr}
    H\psi(x)=E\psi(x)
\end{equation}
and that the function $\Psi(x,t)$ is a strong solution of
\begin{equation}\label{levyschr}
  i\partial_t\Psi(x,t)=H\Psi(x,t)
\end{equation}
being for every $t>0$ also a solution of\refeq{statschr}.
\end{proposition}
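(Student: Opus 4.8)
The plan is to establish the four assertions in turn: the tensor decomposition of $K^Y_\Psi$, the symmetry and semiboundedness of $H_0$, the eigenvalue relation\refeq{statschr}, and the evolution property\refeq{levyschr}.

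\emph{Decomposition.} First I would eliminate the tensor notation from $K^Y$ by combining\refeq{eq:aggiungi3} with\refeq{eq:6}. Substituting $L^Y=L+\partial_t$ and $\tilde L^Y=\tilde L-\partial_t$ gives $\tilde L^Y+iL^Y=(\tilde L+iL)+(i-1)\partial_t$, and since $(i-1)/(i+1)=i$, recalling the definition\refeq{k} of $K$ one gets $K^Y=K-i\partial_t=K\otimes I_t-iI_0\otimes\partial_t$. I would then conjugate by $U_\Psi$ using the factorization $\Psi(x,t)=\psi(x)e^{-iEt}$ from\refeq{wavefunct}. Since $K$ acts on $x$ only and $e^{-iEt}$ is constant in $x$, the first term transforms into $U_\Psi(K\otimes I_t)U_\Psi^{-1}=K_\psi\otimes I_t$, with $K_\psi$ exactly the reduced Doob transform of\refeq{eq:aggiungi5}. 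For the second term, writing $1/\Psi=e^{iEt}/\psi$ and differentiating $f/\Psi$ in $t$ produces, besides $\partial_t f$, the extra contribution $iEf$ coming from $\partial_t e^{iEt}$; after multiplying back by $-i\Psi$ this yields $U_\Psi(-iI_0\otimes\partial_t)U_\Psi^{-1}=-iI_0\otimes\partial_t+E\,\mathrm{Id}$. Summing the two contributions and setting $H_0:=K_\psi+E$ as in\refeq{eq:hamiltoniano} gives the claimed $K^Y_\Psi=H_0\otimes I_t-iI_0\otimes\partial_t$.

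\emph{Symmetry and lower bound.} Symmetry of $H_0$ is immediate: $K$ is symmetric (already shown for the reduced operator), $U_\psi$ is unitary so the conjugate $K_\psi$ is symmetric, and the real shift by $E$ preserves symmetry. For the lower bound I would compute the quadratic form of $K$. Writing $\langle\phi,L\phi\rangle_\mu=\Re\langle\phi,L\phi\rangle_\mu+i\,\Im\langle\phi,L\phi\rangle_\mu$ and using $\tilde L=L^{\dag}$ from\refeq{eq:aggiungi2}, so that $\langle\phi,\tilde L\phi\rangle_\mu=\overline{\langle\phi,L\phi\rangle_\mu}$, a short computation with the factor $(i+1)^{-1}$ gives $\langle\phi,K\phi\rangle_\mu=\Im\langle\phi,L\phi\rangle_\mu-\Re\langle\phi,L\phi\rangle_\mu$, which is precisely the left-hand side of\refeq{eq:55bis} and is therefore $\ge C\|\phi\|_\mu$ by hypothesis~4. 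Transporting this estimate through the norm- and scalar-product-preserving unitary $U_\psi$ (which maps $\leb^2(\re^n,d\mu)$ onto $\leb^2(\re^n,dx)$) then yields the semiboundedness of $H_0=K_\psi+E$.

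\emph{Eigenvalue relation and evolution.} The decisive observation is that $U_\psi^{-1}\psi=\psi/\psi=u_1$, the constant function $1$, together with $Ku_1=0$. The latter holds because, for a conservative bilateral Markov process as recalled in Section~\ref{dirform}, the forward and backward generators annihilate the constant, $Lu_1=\tilde Lu_1=0$, whence $Ku_1=-(\tilde Lu_1+iLu_1)/(i+1)=0$. Consequently $K_\psi\psi=U_\psi Ku_1=0$, so $H_0\psi=E\psi$, and since the Friedrichs extension $H$ coincides with $H_0$ on $D(H_0)\ni\psi$ this gives $H\psi=E\psi$, i.e.\refeq{statschr}. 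The evolution equation\refeq{levyschr} then follows by direct check: $i\partial_t\Psi=i(-iE)\psi e^{-iEt}=E\Psi$, while $H\Psi=e^{-iEt}H\psi=E\Psi$ because $H$ acts on the $x$ variable only; hence $i\partial_t\Psi=H\Psi$, and at each fixed $t$ the function $\Psi(\cdot,t)$ is a scalar multiple of $\psi$ and thus also solves\refeq{statschr}.

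\emph{Main obstacle.} I expect the delicate point to be the integrability bookkeeping when $\mu$ is only $\sigma$-finite (for instance the Lebesgue measure of the plane-wave examples): then $u_1\notin\leb^2(\re^n,d\mu)$ and $\psi\notin\leb^2(\re^n,dx)$, so the identities $Ku_1=0$ and $\psi\in D(H_0)\subset D(H)$ have to be read in the appropriate generalized (improper-process) sense rather than literally in the Hilbert spaces. Making precise that $\psi$ belongs to the domain of the Friedrichs extension and that $H$ genuinely acts on it as $E\psi$ is the step requiring the most care; the algebraic decomposition and the semiboundedness estimate, by contrast, are essentially formal once hypotheses~1--4 are in force.
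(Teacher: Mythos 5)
Your proof is correct and follows essentially the same route as the paper's: the Doob-conjugation computation using $\Psi=\psi e^{-iEt}$ to extract the $+E$ term, hypothesis~4 for the lower bound of $K_\psi$, and the annihilation of the constant function $u_1$ by $L$ and $\tilde L$ to get $H_0\psi=E\psi$. You actually supply more detail than the paper (the explicit quadratic-form computation $\langle\phi,K\phi\rangle_\mu=-\Re\langle\phi,L\phi\rangle_\mu+\Im\langle\phi,L\phi\rangle_\mu$ and the direct check of the evolution equation), and your closing caveat about the $\sigma$-finite case is a genuine subtlety the paper passes over in silence.
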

\begin{proof}
From (\ref{eq:aggiungi3}) we have for $\phi\in D(K^Y_\Psi)$
\begin{eqnarray*}
K^Y_\Psi\phi&=&-\Psi\frac{L+\tilde{L}}{2}\frac{\phi}{\Psi}-i\Psi\partial_t\frac{\phi}{\Psi}+\Psi\frac{L-\tilde{L}}{2i}\frac{\phi}{\Psi}\\
 &= &-\psi\left(\frac{L+\tilde{L}}{2}-\frac{L-\tilde{L}}{2i}\right)\frac{\phi}{\psi}+E\phi-i\partial_t\phi\;=\;(H_0\otimes I_t)\phi-i\left(I_0\otimes\partial_t\right)\phi
\end{eqnarray*}
From the hypothesis~4.\ we can see now that $H_0$ is bounded from below, while from the
hypothesis~\ref{hp3}.\ we deduce that it is densely defined, and from (\ref{eq:aggiungi2}) that it
is symmetric: then its Friedrichs extension $H$ exists and is self-adjoint. Moreover the constant
function $\phi_1=1$ is an element of $D(L)\cap D(\tilde L)$ and
from\refeq{eq:simmetriatransizioni} and hypothesis~\ref{hp1} it is easy to see that
$L\phi_1=\tilde L\phi_1=0$. As a consequence we have from~(\ref{eq:hamiltoniano}) that $\psi\in
D(H_0)$, and that $H_0\psi=E\psi$: namely $\psi$ is an eigenfunction of $H_0$ corresponding to the
eigenvalue $E$. It is then straightforward to prove (\ref{levyschr}).
\end{proof}\\
The Friedrichs extension $H$ of $H_0$ will be called in the following the \emph{Hamiltonian
operator} associated to the Markov process $X_{t}$, and the equation~(\ref{levyschr}) will take
the name of \emph{generalized Schr\"odinger equation}. Remark that if in particular $X_{t}$ is a
$\mu$-symmetric process, namely if $L$ is self-adjoint, then we simply have
\begin{equation}\label{eq:hamiltonianosimmetrico}
K^Y=-L-i\partial_t\qquad\quad K=-L\qquad\quad H_0=-U_\psi LU_\psi^{-1}+E
\end{equation}
so that $H_0$ itself is self-adjoint and hence coincides with $H$. This however is not the case
for every Markov process $X(t)$ that we can consider within our initial hypotheses.

In conclusion we have shown that from every Markov process $X_t$ obeying our four initial
hypotheses we can always derive a self-adjoint Hamiltonian $H$ and a corresponding generalized
Schr\"odinger equation\refeq{levyschr}. In this scheme the process $X_t$ is associated to a
particular stationary solution $\Psi$ of\refeq{levyschr} in such a way that $|\Psi|^2$ coincides
with the invariant measure of $X_t$. Vice versa it would be interesting to be able to trace back a
suitable Markov process $X_t$ from a solution $\Psi$ -- at least from a \emph{stationary} solution
-- of\refeq{levyschr} with a self-adjoint Hamiltonian. In fact, even when from a given Hamiltonian
$H$ and a stationary solution $\Psi$ of\refeq{levyschr} we can manage -- by treading back along
the path mapped in this section -- to get a semigroup $L$, we are still left with the problem of
checking that the minimal conditions are met in order to be sure that there is a Markov process
$X_t$ associated to $L$. In this endeavor the previous discussion about the Dirichlet forms
developed in the Section~\ref{dirform} will turn out to be instrumental as it will be made clear
in the following.

\section{Stochastic mechanics}\label{sm}

Let us begin by remembering that the names we gave to $H$ and to the equation\refeq{levyschr} at
the end of the previous section are justified by the fact that when $X_{t}$ is a solution of the
\ito\ \emph{SDE} (Stochastic Differential Equation)
\begin{equation}\label{ito}
dX_{t}=b(X_{t})dt+dW_{t}
\end{equation}
where $W_t$ is a Wiener process, then $H$ turns out to be the Hamiltonian operator appearing in
the usual Schr\"odinger equation of quantum mechanics. To show how this works we will briefly
review here this well known result in the less familiar framework of the Doob
transformations~\cite{albeverio,morato} because -- at variance with the original Nelson
\emph{stochastic mechanics} -- this procedure allows a derivation of the Schr\"odinger equation
without introducing an explicit \emph{dynamics} that, at present, is still not completely ironed
out in the more general context of the L\évy processes.

Let us start by considering the operator
\begin{equation*}
  L_{0}:=\frac{1}{2}\,\nabla^2
\end{equation*}
with $D(L_0)$ the set of all the functions $f$ that, along with their first and second generalized
derivatives, belong $\leb^{2}(\re^{n},dx)$. It is well known that $(L_0,D(L_0))$ is the
infinitesimal generator of a Wiener process. If now we take $\phi\in D(L_{0})$ such that
\begin{equation}
 \label{eq:condifi1}
\int_{\re^{n}}\phi^2(x)dx=1,\qquad\quad \phi\neq0\quad \mbox{$a.s.$\ in}\ dx
\end{equation}
we can define in $\leb^{2}(\re^{n},d\mu)$ with $\mu(dx)=\rho(x)dx=\phi^{2}(x)dx$ a second operator
\begin{equation}\label{eq:diffoval}
 Lf:=\frac{L_{0}(\phi f)-fL_0\phi}{\phi}
\end{equation}
where $D(L):=C^{\infty}_{0}(\re^{n})$ is the set of infinitely derivable functions on $\re^{n}$
with bounded support.
It is straightforward now to see that $L$ is correctly defined~\cite{andrisani}, and
that\refeq{eq:diffoval} can be recast in the form
\begin{equation}\label{eq:diffovalbis}
 Lf=\frac{\nabla \phi}{\phi}\cdot\nabla f+\frac{1}{2}\nabla^2 f=b\cdot\nabla f+\frac{1}{2}\nabla^2
 f\qquad\qquad b=\frac{\nabla \phi}{\phi}
\end{equation}
which on the other hand is typical for the generators of a process satisfying the
\emph{SDE}\refeq{ito}.  At first sight this seems to imply directly that to every given $\phi$ we
can always associate a Markov process (weak) solution of the \emph{SDE}\refeq{ito} with $b$
defined as in\refeq{eq:diffovalbis}, but this could actually be deceptive because this association
is indeed contingent on the properties of the function $b$, and hence of $\phi$. To be more
precise: if we know that the equation\refeq{ito} has a solution $X_t$, then its generator
certainly has the form\refeq{eq:diffovalbis}; but vice versa if an operator\refeq{eq:diffovalbis}
-- or a \emph{SDE}\refeq{ito} -- is given with an arbitrary $b$, we can not in general be sure
that a corresponding Markov process $X_t$ solution of\refeq{ito} does in fact exist, albeit in a
weak sense. In the light of these remarks it is important then to be able to prove, by means of
the Dirichlet forms, that for $L$ and $b$ defined as in\refeq{eq:diffovalbis} we can always find a
Markov process solution of\refeq{ito}.
\begin{theorem}\label{th:esempio1}
The operator $(L,D(L))$ defined in (\ref{eq:diffoval}) is closable and its closure $(\bar
L,D(\bar L))$ is a self-adjoint, negative definite operator which is the infinitesimal generator
of a Markov process $(X_t)_{t\in\re}$ (weak) solution of\refeq{ito}. The measure $\mu(dx)$ is
invariant for this Markov process.
\end{theorem}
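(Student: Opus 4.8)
The plan is to apply the machinery of Dirichlet forms developed in Section~\ref{dirform}, using the operator $L$ as a symmetric operator whose associated bilinear form is verified to be a regular Dirichlet form. First I would establish the three facts claimed about $(\bar L,D(\bar L))$ in turn: closability, self-adjointness together with negative definiteness, and finally the existence of the associated Markov process solving\refeq{ito}. The key observation is that $L$ as defined in\refeq{eq:diffoval} is \emph{symmetric} on $\leb^2(\re^n,d\mu)$ with $\mu(dx)=\phi^2(x)\,dx$: this is precisely the point of the Doob-type conjugation by $\phi$, since integration by parts against the weight $\phi^2$ turns the first-order drift term into a symmetric contribution. Concretely, for $f,g\in C_0^\infty(\re^n)$ one computes
\begin{equation*}
 \langle Lf,g\rangle_\mu=-\frac{1}{2}\int \nabla f(x)\cdot\nabla g(x)\,\phi^2(x)\,dx=\langle f,Lg\rangle_\mu
\end{equation*}
using\refeq{eq:diffovalbis} and discarding boundary terms thanks to the compact support. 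This identity simultaneously shows that $L$ is symmetric and negative definite, since taking $g=f$ yields $\langle Lf,f\rangle_\mu=-\tfrac12\int|\nabla f|^2\phi^2\,dx\leq0$.

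Next I would invoke Proposition~\ref{pr:chiudibile}: a symmetric, negative definite operator $(L,D(L))$ with $D(L)=C_0^\infty(\re^n)$ automatically yields a closable bilinear form $\dir(f,g):=-\langle Lf,g\rangle_\mu$, which gives the closability of $L$ and hence the existence of its closure $(\bar L,D(\bar L))$. The negative definiteness passes to the closure by continuity, and self-adjointness of $\bar L$ follows because the symmetric form $\dir$ is closable and its closure is symmetric, so the self-adjoint operator associated to the closed form is exactly $\bar L$ (the Friedrichs-type construction for a symmetric closable form produces a self-adjoint generator). To produce the Markov process I would then show that the closure $(\bar\dir,D(\bar\dir))$ is a \emph{regular Dirichlet} form by checking the Beurling--Deny representation\refeq{eq:simmform} via Proposition~\ref{pr:closable-diri}: here the form has purely diffusive character, $\dir(f,g)=\tfrac12\int\nabla f\cdot\nabla g\,\phi^2\,dx$, so $\mu^{ij}(dx)=\tfrac12\delta_{ij}\phi^2(x)\,dx$, with vanishing killing and jump measures, and conditions\refeq{eq:verificadiri1}--\refeq{eq:verificadiri2} are trivially satisfied. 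Theorem~\ref{th:dirimarkov} then associates a Markov process, and the bilateral sewing procedure described after Theorem~\ref{th:dirimarkov} yields $(X_t)_{t\in\re}$; invariance of $\mu$ follows from $L u_1=\bar L u_1=0$ on the constant function, together with the duality\refeq{eq:simmetriatransizioni} specialized to this symmetric case.

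The remaining point is to identify this Markov process as a \emph{weak} solution of the \sde\refeq{ito} with drift $b=\nabla\phi/\phi$. Here I would read off the generator of the constructed process from\refeq{eq:diffovalbis}, which has exactly the form $b\cdot\nabla+\tfrac12\nabla^2$ expected for a solution of\refeq{ito}; since the process furnished by the Dirichlet form has $\bar L$ as its generator and $\bar L$ extends $L$, the martingale-problem formulation of weak solutions is matched on the core $C_0^\infty(\re^n)$, which suffices to characterize the law. The main obstacle I anticipate is the regularity needed to justify the integration by parts and the Beurling--Deny verification when $\phi$ (hence the weight $\phi^2$ and the drift $b=\nabla\phi/\phi$) is only as smooth as membership in $D(L_0)$ guarantees: one must ensure that $\phi^2$ defines a legitimate Radon diffusion measure and that the boundary terms genuinely vanish, which is exactly where the condition $\phi\neq0$ a.s.\ in\refeq{eq:condifi1} and the square-integrability of the first two generalized derivatives become essential. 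This technical control of the weight — rather than the abstract Dirichlet-form bookkeeping — is the delicate part of the argument.
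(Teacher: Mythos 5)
Your proposal is correct and follows essentially the same route as the paper: integration by parts against the weight $\phi^2$ to exhibit the form $\dir(f,g)=-\langle Lf,g\rangle_\mu$ in Beurling--Deny shape with only a diffusion part, then Propositions~\ref{pr:chiudibile} and~\ref{pr:closable-diri} together with Theorems~\ref{th:coercive_semigroup} and~\ref{th:dirimarkov}, and finally the verification that $\bar L f_1=0$ on the constant function to secure conservativeness and the invariance of $\mu$. Your added remarks on the martingale-problem identification of the weak solution and on the regularity of $\phi$ are sensible supplements but do not change the argument.
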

\begin{proof}
Let us consider on $\leb^{2}(\re^{n},d\mu)$ the bilinear form
\begin{eqnarray*}
\mathcal{E}(f,g):=-\langle Lf,g\rangle_\mu
\end{eqnarray*}
with $D(\dir):=C^{\infty}_{0}(\re^{n})$. For $f,g \in D(\dir)$ with
an integration by parts we get
\begin{eqnarray*}
\mathcal{E}(f,g)&=&-\frac{1}{2}\int_{\re^{n}} g(x)\phi(x)\nabla^2 (\phi f)(x)dx+\frac{1}{2}\int_{\re^{n}}g(x)f(x)\phi(x)\nabla^2\phi(x)dx\\
&=&-\frac{1}{2}\int_{\re^{n}} g(x)\phi^{2}(x)\nabla^2 f(x)dx-\int_{\re^{n}}g(x)\phi(x) \nabla f(x)\cdot\nabla\phi(x)dx\\
&=&\int_{\re^{n}} \phi^{2}(x)\nabla g(x)\cdot\nabla f(x)dx
\end{eqnarray*}
namely our form satisfies the Beurling-Deny formula\refeq{eq:simmform} with vanishing jump and
killing measures, and with the condition\refeq{eq:verificadiri2} trivially satisfied. As a
consequence $\dir$ is symmetric and positive definite, so that also $-L$ is symmetric and positive
definite and then is also closable~\cite{reed}. Hence by Proposition \ref{pr:chiudibile}
$(\dir,D(\dir))$ is closable and from the Proposition~\ref{pr:closable-diri} its closure
$(\bar{\mathcal{E}},D(\bar{\mathcal{E}}))$ is a symmetric, regular Dirichlet form which from
Theorem~\ref{th:coercive_semigroup} is associated to a self-adjoint, infinitesimal generator
$(\bar{L},D(\bar{L}))$. Of course $(\bar{L},D(\bar{L}))$ itself turns out~\cite{reed} to be the
closure of $(L,D(L))$.

This $\bar{L}$ generates now a unique, bilateral Markov process
$(X_{t})_{t\in\re}$ on $(\re^{n},\mathcal{B}(\re^{n}))$ having $\mu$
as its invariant measure when the conditions discussed in the
remarks following the Theorem~\ref{th:dirimarkov} are met. In this
context we are then left just with the task of showing that the
constant function $f_1(x)=1$ belongs to $D(\bar L)$ and that $\bar L
f_1=0$. In fact for every $f\in C_0^{\infty}$ dense in
$\leb^{2}(\re^{n},d\mu)$, with an integration by parts we have
\begin{eqnarray*}
\langle f_1,\bar{L}f\rangle_\mu&=&\int_{\re^n} [\bar L f ]\phi^2dx=\int_{\re^n} [L f] \phi^2dx\\
 &=&\frac{1}{2}\int_{\re^n}\phi \nabla^2 (f\phi) dx-\frac{1}{2}\int_{\re^n}f\phi\nabla^2 \phi dx\\
 &=&\frac{1}{2}\int_{\re^n} f\phi\nabla^2 \phi dx-\frac{1}{2}\int_{\re^n}f\phi\nabla^2 \phi dx=0
\end{eqnarray*}
Being $\bar L$ self-adjoint this implies first that $f_1\in D(\bar
L)$, and that $\langle \bar{L}f_1,f\rangle_\mu=0$ for every $f\in
C_0^{\infty}$ dense in $\leb^{2}(\re^{n},d\mu)$, and then that $\bar
Lf_1=0$.
\end{proof}\\
Since the process $(X_t)_{t\in\re}$ obtained from $L_0$ in the Theorem~\ref{th:esempio1} satisfies
all the hypotheses~{1-4} with $\rho(x)=\phi^2(x)$, we can now go on looking for the Hamiltonians
produced by the Doob transformations. Take first the wave functions\refeq{wavefunct} with
arbitrary $E\in\re$ and $S(x)=0$, namely
\begin{equation}\label{statsol}
  \psi(x)=\phi(x)\quad\qquad\Psi(x,t)=e^{-iEt}\phi(x)
\end{equation}
Since $\bar{L}$ is self-adjoint, by
applying\refeq{eq:hamiltonianosimmetrico} we easily find as
Hamiltonian associated to $X_{t}$ by the Doob transformation the
operator
\begin{equation}\label{hamiltonian}
 H:=-\frac{1}{2}\nabla^2+V(x)
\end{equation}
where the potential function (defined up to a constant additive factor) is
\begin{equation}\label{potential}
 V(x)=\frac{\nabla^2 \phi(x)}{2\phi(x)}+E
\end{equation}
namely the potential of a Schr\"odinger equation admitting $\phi$ as
eigenvector with eigenvalue $E$ as it is immediately seen by
rewriting\refeq{potential} as
\begin{equation*}
    -\frac{1}{2}\nabla^2\phi+V\phi=E\phi
\end{equation*}
In this way  the Doob transformation associates a Markov process to every stationary
solution\refeq{statsol} of the Schr\"odinger equation
\begin{equation}\label{schreq}
 i\partial_t \Psi=H\Psi
\end{equation}
with Hamiltonian\refeq{hamiltonian}. When on the other hand we consider a non vanishing $S(x)$ --
namely a \emph{gauge transformation} with respect to the previous case -- then our wave functions
show the complete form\refeq{wavefunct}, and starting again from\refeq{eq:hamiltonianosimmetrico}
a slightly longer calculation shows that the Hamiltonian now is
\begin{equation}
\label{eq:gauge}
    H=\frac{1}{2}(i\nabla+\nabla S)^2+V
\end{equation}
with $V(x)$ always defined as in\refeq{potential}: in other words in this case from $\phi$ and $S$
we get both a scalar and a vector potential. Remark that, despite the presence of the term $\nabla
S$ in\refeq{eq:gauge}, no physical electromagnetic field is actually acting on the particle, as is
well known from the gauge transformation theory. The study of a stochastic description of a
particle subjected to an electromagnetic field is not undertaken here: readers interested in this
argument can usefully refer to~\cite{morato82}.

Similar results can be obtained by initially choosing a constant function $\phi(x)=1$ and
$S(x)=p\cdot x$ so that
\begin{equation*}
    \psi(x)=e^{ip\cdot x}\qquad\quad\Psi(x,t)=e^{ip\cdot x-iEt}
\end{equation*}
namely the wave function of a plane wave. In this case however instead of\refeq{eq:diffoval} we
have to take
\begin{equation*}
    Lf:=L_0f+p\cdot\nabla f=\frac{1}{2}\nabla^2 f+p\cdot\nabla f
\end{equation*}
which is still of the form\refeq{eq:diffovalbis}, albeit with a
constant $b(x)=p$. Since this $L$ is no longer self-adjoint in
$\leb(\re^2,dx)$ because, with an integration by parts, we find
\begin{equation*}
    L^\dag=\tilde{L}=\frac{1}{2}\nabla^2 f-p\cdot\nabla f
\end{equation*}
we now get  from\refeq{k} and\refeq{eq:aggiungi5}
\begin{equation*}
  Kf = -\frac{1}{2}\nabla^2 f-ip\cdot\nabla f \qquad\quad
  K_\psi f = -\frac{1}{2}\nabla^2 f-\frac{p^2}{2} f
\end{equation*}
and then finally -- by choosing, as usual, $E=p^2/2$ -- we obtain
from\refeq{eq:hamiltoniano}
\begin{equation*}
    H=\frac{1}{2}\,\nabla^2
\end{equation*}
which is the Hamiltonian of the Schr\"odinger equation\refeq{schreq} in its free form. Remark that
since $b(x)=p$ the resulting Markov process associated to $\Psi$ now simply is a Wiener process
plus a constant drift, but, at variance with the previous cases, we no longer have normalizable
stationary solutions of\refeq{schreq}, because $\phi^2(x)dx=dx$ defines an invariant measure which
is the Lebesgue measure and not a probability: in other words our Markov process $X_t$ will be an
improper one in the sense outlined in the Section~\ref{se:articolo3}.

 \section{L\'evy-Schr\"odinger Equation}\label{se:articolo5}

In this section we will focus our attention on a form of the generalized Schr\"odinger
equation~(\ref{levyschr}) which, without being the most general one, is less particular than that
discussed in the Section~\ref{sm}: namely the L\'evy-Schr\"odinger equation already introduced in
a few previous papers~\cite{cufaro6} where the Hamiltonian operator was found to be
\begin{equation}
\label{eq:lev-scho}
 H=-L_0+V,\qquad D(H)=D(L_0)\cap D(V)
\end{equation}
with $(L_0,D(L_0))$ infinitesimal generator of a symmetric L\'evy
process taking values in $\re^n$, and $V$ a measurable real function
defined on $\re^n$ that makes the operator $(H,D(H))$ self-adjoint
and bounded from below. We have already seen in (\ref{eq:levval})
and (\ref{eq:levval2}) how the generator $(L_0,D(L_0))$ of a L\évy
process actually operates. We add here that when the L\évy process
is symmetric its logarithmic characteristic $\eta(u)$ is a real
function, and hence from\refeq{eq:levval} we easily have that
$(L_0,D(L_0))$ is self-adjoint in $\leb^2(\re^n,dx)$, while from the
L\'evy-Khintchin formula we also deduce that it is negative
definite~\cite{sato}. In the quoted papers, however, the
choice\refeq{eq:lev-scho} was essentially dictated by an analogy
argument and there was no real attempt to deduce it: here instead we
will try to extend this idea, and to justify it within the framework
of the Doob transformations.

To show the way, we here consider first the case $\phi(x)=1$
associated to the Lebesgue measure $\mu(dx)=dx$. Every L\'evy
process is a time-homogeneous Markov process, and this $\mu$ acts as
its invariant measure~\cite{sato}. It is apparent, moreover, that
here we are not required to introduce a further operator $L$ -- as
we did in\refeq{eq:diffoval} for the Wiener case -- because $L_0$
itself plays this role. As a consequence we can skip to prove a
statement as the Theorem~\ref{th:esempio1}, for $L_0$ is by
hypothesis the generator of a L\évy process. On the other hand, even
if here $\mu$ is only $\sigma$-finite and cannot be considered as a
probability measure, we can apply the Doob transformation defined in
the previous section because all the hypotheses~{1-4} are met. Since
$L_0$ is self-adjoint in $\leb^2(\re^n,dx)$ (and hence for the
backward generator we have $\tilde{L}_0=L_0^\dag=L_0$) from
(\ref{eq:6}) we find
\begin{equation*}
K=-L_0\qquad\qquad K^Y=-L_0-i\partial_t
\end{equation*}
and to implement a Doob transformation -- by taking the Lebesgue
measure as invariant measure, $S(x)=0$ and $E=0$ for simplicity --
we choose
\begin{equation}\label{planew}
\Psi(t,x)=\psi(x)=1
\end{equation}
namely the simplest possible form of a plane wave. As a consequence
in this first case we finally get
\begin{equation*}
H=-L_0
\end{equation*}
namely we find the case $V(x)=0\,$ of\refeq{eq:lev-scho}. Since this Hamiltonian essentially
coincides with our initial generator $L_0$ of a L\évy process, it is straightforward to conclude
-- as in the Proposition~\ref{prop} and the subsequent remarks -- that to a plane
wave\refeq{planew} solution of a free generalized Schr\"odinger equation\refeq{levyschr}, namely
of the free L\évy-Schr\"odinger equation
\begin{equation}\label{freeeq}
    i\partial_t\Psi=H\Psi=-L_0\Psi
\end{equation}
we can simply associate the L\'evy process corresponding to $L_0$. As a matter of fact this will
be an improper process with generator $L_0$ and with the Lebesgue measure $\mu$ as initial -- and
invariant -- measure. The free equation\refeq{freeeq} is the case that has already been discussed
at length -- albeit in a more heuristic framework -- in the previous papers~\cite{cufaro6}. If our
L\évy process is also rotationally invariant then from\refeq{eq:levval3}, and within the notations
of the Section~\ref{se:articolo2}, the Hamilton operator $H$ becomes
\begin{equation}\label{eq:freeham}
 [Hf](x)=-\frac{\alpha}{2}\nabla^2 f(x)-\int_{y\neq0}\left[\delta^2_yf\right](x)\,\ell(dy)
\end{equation}
for any complex Schwarz function $f$ and $x\in\re^{n}$. Note that if the jump term vanishes
(namely if $L_0$ is the generator of a Wiener process) and $\alpha=1$ we have
\begin{equation*}
H=-\frac{1}{2}\,\nabla^2
\end{equation*}
i.e. the free Hamilton operator\refeq{hamiltonian} of the stochastic mechanics presented in the
previous section. As a consequence we see that\refeq{eq:freeham} can be considered as the
generalization of the usual quantum mechanical Hamiltonian by means of a jump term produced by the
possible non Gaussian nature of our background L\évy process.

We turn then our attention to the more interesting case of a non constant $\phi$ leading to a non
vanishing $V$ in\refeq{eq:lev-scho}, and we suppose that $\alpha_{ij}=0$ in\refeq{eq:levval2},
namely that $L_0$ is the generator of a pure jump process without an unessential Gaussian
component (we can always add it later). With this $L_0$ we take now a $\phi\in D(L_{0})$ such that
\begin{equation}\label{positive}
\int_{\re^{n}}\phi^2(x)\,dx=1,\qquad\quad \phi> 0,\ \mbox{a.s. in}\
dx
\end{equation}
and in analogy with\refeq{eq:diffoval} we introduce the new operator
$(L,D(L))$ in $\leb^{2}(\re^{n},d\mu)$
\begin{equation}\label{eq:diffoval2}
  Lf:=\frac{L_{0}(\phi f)-fL_{0}\phi}{\phi}
\end{equation}
with $D(L):=C^{\infty}_{0}(\re^{n})$ and $\mu(dx)=\phi^{2}(x)dx$.
\begin{proposition}\label{le:lemmadom}
If $\phi\in D(L_0)$, then for every $f\in C_0^{\infty}(\re^n)$ we have $\phi f\in D(L_0)$ and
\begin{equation}
\label{eq:levintparti}
 L_0(\phi f)=fL_0\phi+\phi \,L_0 f+\int_{y\neq0}\delta_y \phi\,\delta_y f\,\ell(dy)
\end{equation}
\end{proposition}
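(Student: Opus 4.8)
The plan is to verify the product-type formula \refeq{eq:levintparti} by working directly with the explicit integro-differential expression\refeq{eq:levval2} for $L_0$, applied to Schwarz-class functions, and then to justify that the membership $\phi f\in D(L_0)$ follows from the integrability condition\refeq{eq:levdom}. Since we have assumed $\alpha_{ij}=0$, the generator reduces to the pure-jump part
\begin{equation*}
 [L_0 g](x)=\int_{y\neq0}[\delta_y^2 g](x)\,\ell(dy),\qquad [\delta_y^2 g](x)=g(x+y)-g(x)-y\cdot\nabla g(x)\,\bm1_{B_1}(y)
\end{equation*}
so the whole identity hinges on a pointwise algebraic relation for the second-difference operator $\delta_y^2$ acting on a product.

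First I would establish the key algebraic step: for the product $\phi f$ one computes $\delta_y^2(\phi f)$ and reorganizes it. The identity\refeq{eq:provalemma0}, namely $\delta_y^2(fg)=g\,\delta_y^2 f+f\,\delta_y^2 g+\delta_y f\,\delta_y g$, is precisely what I need, applied with the roles of $\phi$ and $f$. This gives the pointwise relation
\begin{equation*}
 [\delta_y^2(\phi f)](x)=f(x)[\delta_y^2\phi](x)+\phi(x)[\delta_y^2 f](x)+[\delta_y \phi](x)[\delta_y f](x)
\end{equation*}
Integrating against $\ell(dy)$ then formally yields\refeq{eq:levintparti}, since the first two terms integrate to $f\,L_0\phi$ and $\phi\,L_0 f$ respectively, and the third is exactly the cross term $\int_{y\neq0}\delta_y\phi\,\delta_y f\,\ell(dy)$. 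The compact support of $f\in C_0^{\infty}(\re^n)$ and its smoothness make $L_0 f$ well defined and the expansion legitimate at the level of Schwarz-class manipulations.

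The main obstacle will be rigorously controlling the integrability of the three separate pieces, because the decomposition is only meaningful if each term is finite; the L\'evy measure $\ell$ has a singularity near $y=0$, and individually the terms $f\,\delta_y^2\phi$ and $\phi\,\delta_y^2 f$ need not be separately integrable unless one is careful near the origin. I would handle this by splitting the integral over $\{|y|\leq 1\}$ and $\{|y|>1\}$. On the bounded region the Taylor expansion to second order combined with the standard L\'evy-measure bound $\int_{|y|\le1}|y|^2\,\ell(dy)<\infty$ controls $\delta_y^2\phi$ and $\delta_y^2 f$ (using $\phi\in D(L_0)$ and smoothness of $f$), while the cross term is bounded using Cauchy--Schwarz together with the finiteness built into\refeq{eq:levdirdom}, since both $\int\!\!\int|\delta_y\phi|^2\ell(dy)dx$ and the analogous quantity for $f$ are finite. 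On the region $\{|y|>1\}$, where $\ell$ is a finite measure, all terms reduce to first differences and are manifestly integrable given the boundedness of $\phi f$ and $f$.

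Finally, to conclude $\phi f\in D(L_0)$, I would verify the Fourier-side criterion\refeq{eq:levdom}: having shown that $L_0(\phi f)$ as defined by the convergent integral belongs to $\leb^2(\re^n,dx)$, and that $\phi f\in\leb^2$, the identification of this integral expression with the pseudo-differential action\refeq{eq:levval} guarantees $\int|\widehat{\phi f}(u)|^2|\eta(u)|^2\,du<\infty$, which is exactly the domain condition. The only subtlety is ensuring the integral representation and the Fourier representation agree on the dense Schwarz class and then extend the identity by the closedness of $L_0$; I expect this density-and-closure argument, rather than the algebra, to be where the real care is needed.
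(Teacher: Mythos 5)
Your pointwise strategy is sound only in the case the paper treats as a preliminary step, namely $\phi\in C_0^{\infty}(\re^n)$ (or Schwartz class), where the integro-differential representation\refeq{eq:levval2} of $L_0$ is actually valid and $\delta_y^2\phi$ makes pointwise sense. The genuine gap is in the passage to a general $\phi\in D(L_0)$: that domain is defined by the Fourier condition\refeq{eq:levdom}, and such a $\phi$ need be neither smooth nor pointwise differentiable, so the quantity $[\delta_y^2\phi](x)=\phi(x+y)-\phi(x)-y\cdot\nabla\phi(x)\bm1_{B_1}(y)$ and the Taylor expansion you invoke near $y=0$ are not available. Your plan to ``integrate the pointwise identity against $\ell(dy)$'' therefore cannot be carried out term by term for general $\phi$, and the concluding step --- identifying the integral expression with the pseudo-differential action and reading off\refeq{eq:levdom} --- is exactly the part you defer to ``a density-and-closure argument'' without supplying it; that argument is the actual content of the proposition.

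The paper closes this gap differently: after the smooth case, it shows $\phi f\in D(\dir_0)$ using the form-domain condition\refeq{eq:levdirdom} (which, unlike\refeq{eq:levval2}, only involves first differences $\delta_y\phi$ and so is meaningful for any $\phi\in D(L_0)\subseteq D(\dir_0)$), and then runs a duality argument: it exhibits a $\psi\in\leb^2(\re^n,dx)$ equal to the right-hand side of\refeq{eq:levintparti} such that $\langle L_0g,\phi f\rangle=\langle g,\psi\rangle$ for every $g$ in the core $C_0^{\infty}(\re^n)$, using\refeq{eq:provalemma0},\refeq{eq:levdirval}, and a symmetry/change-of-variables cancellation of the triple-difference term. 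Self-adjointness of $L_0$ then forces $\phi f\in D(L_0)$ and $L_0(\phi f)=\psi$. If you want to salvage your route, you would have to approximate $\phi$ by smooth functions $\phi_n\to\phi$ with $L_0\phi_n\to L_0\phi$ in $\leb^2$, prove convergence of the cross terms via $\dir_0(\phi_n-\phi,\phi_n-\phi)\to0$, and invoke closedness of $L_0$ --- workable, but none of it is in your sketch, and it is where the real difficulty lies.
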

\begin{proof} See Appendix~\ref{appendix}.
\end{proof}\\
This statement -- which generalizes an \emph{integration by parts} rule -- proves first that our
definition\refeq{eq:diffoval2} is consistent in the sense that $\phi f\in D(L_0)$; then
from\refeq{eq:diffoval2} and\refeq{eq:levintparti} it also gives the jump version
of\refeq{eq:diffovalbis}
\begin{equation*}
    Lf=L_0f+\int_{y\neq0}\frac{\delta_y \phi}{\phi}\,\delta_y f\,\ell(dy)
\end{equation*}
so that, by taking into account the equation\refeq{eq:levval3} with
$a=0$, we find
\begin{eqnarray}\label{generator}
    [Lf](x)&=&\int_{y\neq0}\left(\delta_y^2f+\frac{\delta_y \phi}{\phi}\,\delta_y
    f\right)\ell(dy)\nonumber\\
    &=&\int_{y\neq0}\left(\delta_yf-\frac{\phi\bm1_{B_1}(y)}{\phi+\delta_y\phi}\,y\cdot\nabla f\right)\frac{\phi+\delta_y\phi}{\phi}\,\ell(dy)\nonumber\\
    &=&\int_{y\neq0}\left[f(x+y)-f(x)-\gamma(x,y)\,y\cdot\nabla f(x)\right]\lambda(x;dy)
\end{eqnarray}
where
\begin{equation}\label{coeff}
    \gamma(x,y)=\frac{\phi(x)}{\phi(x+y)}\,\bm1_{B_1}(y)
    \qquad\qquad\lambda(x;dy)=\frac{\phi(x+y)}{\phi(x)}\,\ell(dy)
\end{equation}
The equation\refeq{generator} explicitly shows that the generator $L$ introduced
in\refeq{eq:diffoval2} is a \emph{L\évy-type} operator~\cite{applebaum,jacob}. We then state our
main result on the existence of a (L\évy-type) Markov process associated to $L$ and, through a
subsequent Doob transformation, to the (stationary) solutions of a L\évy--Schr\"odinger equation.
\begin{theorem}\label{th:esempio2}
The operator $(L,D(L))$ defined in\refeq{eq:diffoval2} is closable and its closure
$(\bar{L},D(\bar{L}))$ is a self-adjoint, negative definite operator which is the infinitesimal
generator of a Markov process $(X_t)_{t\in\re}$. The measure $\mu(dx)$ is invariant for this
Markov process.
\end{theorem}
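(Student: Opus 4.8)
The plan is to mimic the structure of the proof of Theorem~\ref{th:esempio1}, replacing the purely diffusive Beurling--Deny computation with its jump-process analogue. I would first introduce the symmetric bilinear form
\begin{equation*}
 \dir(f,g):=-\langle Lf,g\rangle_\mu,\qquad D(\dir):=C_0^\infty(\re^n)
\end{equation*}
on $\leb^2(\re^n,d\mu)$ with $\mu(dx)=\phi^2(x)\,dx$, and compute it explicitly using the integration-by-parts rule\refeq{eq:levintparti} from Proposition~\ref{le:lemmadom}. Writing out $\langle Lf,g\rangle_\mu=\int g(x)[Lf](x)\phi^2(x)\,dx$ and substituting the expression $Lf=L_0f+\int_{y\neq0}\frac{\delta_y\phi}{\phi}\delta_yf\,\ell(dy)$, the goal is to symmetrize the result so that it takes the Beurling--Deny form\refeq{eq:simmform}. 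The key identity I expect to need is that, after inserting the weight $\phi^2$, the jump kernel organizes into
\begin{equation*}
 \dir(f,g)=\frac{1}{2}\int\!\!\int_{y\neq0}[\delta_yf](x)[\delta_yg](x)\,\phi(x)\phi(x+y)\,\ell(dy)\,dx
\end{equation*}
which manifestly has no diffusion part (since $\alpha_{ij}=0$), no killing measure, and a symmetric, positive jump measure $J(dx,dz)$ obtained from $\phi(x)\phi(x+y)\ell(dy)\,dx$ under the change $z=x+y$. Verifying this symmetric form is the technical heart of the argument.

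Once the form is in Beurling--Deny shape, the remaining structure parallels Theorem~\ref{th:esempio1} closely. The symmetry and positivity of $\dir$ show that $-L$ is symmetric and negative definite, hence closable by Proposition~\ref{pr:chiudibile}; then Proposition~\ref{pr:closable-diri} guarantees that the closure $(\bar\dir,D(\bar\dir))$ is a \emph{regular Dirichlet form}, whose associated generator $(\bar L,D(\bar L))$ is self-adjoint and negative definite by Theorem~\ref{th:coercive_semigroup}, and coincides with the closure of $(L,D(L))$. Theorem~\ref{th:dirimarkov} then yields the transition functions and the associated Markov process $(X_t)_{t\in\re}$, and I would finish exactly as before by checking that the constant $f_1=1$ lies in $D(\bar L)$ with $\bar Lf_1=0$; by the remarks following Theorem~\ref{th:dirimarkov} this both rules out sub-Markovian behaviour and establishes that $\mu$ is invariant. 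The computation $\langle f_1,\bar Lf\rangle_\mu=\int[L_0f]\phi^2\,dx+\int\!\!\int\frac{\delta_y\phi}{\phi}\delta_yf\,\ell(dy)\phi^2\,dx=0$ should again collapse by the self-adjointness of $L_0$ together with\refeq{eq:levintparti}.

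The main obstacle will be the explicit symmetrization of the jump form and the careful handling of integrability. Unlike the diffusive case, where an integration by parts is elementary, here I must show that the double integral defining $\dir(f,g)$ actually converges and that the formal manipulations using\refeq{eq:levintparti} are justified near the singularity $y=0$ of the L\'evy measure --- this requires controlling $[\delta_yf](x)$, which vanishes to first order, against $\ell(dy)$, and bounding the weight $\phi(x+y)/\phi(x)$. I would verify the integrability condition\refeq{eq:verificadiri1} for the resulting jump measure, using $\phi\in D(L_0)$ and the compact support of $f$. A secondary subtlety is confirming that $C_0^\infty(\re^n)$ is genuinely dense in $\leb^2(\re^n,d\mu)$ and contained in $D(L)$, which rests on hypothesis~(\ref{positive}) that $\phi>0$ a.s., ensuring $\mu$ is equivalent to Lebesgue measure; this is what lets the final duality argument for invariance go through verbatim.
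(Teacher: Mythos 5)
Your proposal follows the paper's own proof essentially step for step: the same bilinear form $\dir(f,g)=-\langle Lf,g\rangle_\mu$ on $C_0^\infty(\re^n)$, the same symmetrization to the jump-only Beurling--Deny expression $\frac{1}{2}\int\!\!\int_{y\neq0}\delta_yf\,\delta_yg\,\phi(x)\phi(x+y)\,\ell(dy)dx$ (the paper reaches it via\refeq{eq:levdirval} and\refeq{eq:propr_delta} rather than by substituting the rewritten form of $L$, but the algebra is equivalent), and the same closing argument via Propositions~\ref{pr:chiudibile} and~\ref{pr:closable-diri}, Theorems~\ref{th:coercive_semigroup} and~\ref{th:dirimarkov}, and the verification that $f_1=1$ satisfies $\bar Lf_1=0$ to obtain Markovianity and invariance of $\mu$. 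The approach is correct and matches the paper's.
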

\begin{proof}
As in the Theorem \ref{th:esempio1} from $(L,D(L))$ we first define
in $\leb^2(\re^n,d\mu)$ the bilinear form
\begin{equation*}
\mathcal{E}(f,g):=-\langle Lf,g\rangle_\mu
\end{equation*}
with $D(\mathcal{E}):=C^{\infty}_{0}(\re^{n})$ and we remark that from\refeq{eq:diffoval2} we have
\begin{equation*}
\dir(f,g)=-\int \frac{L_{0}(\phi f)-fL_{0}\phi}{\phi}\,g\phi^2\,dx=
-\int\phi\, g L_0(f\phi)\, dx+ \int \phi\, gfL_0\phi\,dx
\end{equation*}
Then from\refeq{eq:levdirval} with $\alpha_{ij}=0$ we  obtain
\begin{eqnarray*}
  \dir(f,g)&=&  \frac{1}{2}\!\int\!\!\int_{y\neq 0}[\delta_y(g\phi)](x)\,[\delta_y(f\phi)](x)\,\ell(dy)dx \\
           &&\qquad\qquad\qquad\quad  -\frac{1}{2}\!\int\!\!\int_{y\neq0} [\delta_y(fg\phi)](x)\,[\delta_y\phi](x)\,\ell(dy)dx
\end{eqnarray*}
and from\refeq{eq:propr_delta} after some tiring but simple algebra
we get
\begin{equation}
\label{eq:dirifine}
\dir(f,g)=\frac{1}{2}\int\!\!\int_{y\neq0}[\delta_yf](x)[\delta_yg](x)\phi(x+y)\phi(x)\,\ell(dy)dx
\end{equation}
This expression for $\dir$ has the required form\refeq{eq:simmform}
with vanishing killing and diffusion measures, and $J(dz,dx)$ given
by $\phi(x+y)\phi(x)\ell(dy)dx$ with $z=x+y$, which is positive
because of\refeq{positive}. Since the
condition\refeq{eq:verificadiri1} is satisfied, by reproducing the
same argument previously adopted in the Theorem~\ref{th:esempio1} we
get that there exists a self-adjoint, infinitesimal generator
$(\bar{L},D(\bar{L}))$ which turns out~\cite{reed} to be the closure
of $(L,D(L))$.

This $\bar{L}$ generates a unique, bilateral Markov process $(X_{t})_{t\in\re}$ on
$(\re^{n},\mathcal{B}(\re^{n}))$ having $\mu$ as its invariant measure when the conditions
discussed in the remarks following the Theorem~\ref{th:dirimarkov} are met. Hence, as in the
Theorem~\ref{th:esempio1}, we should only check that the constant element $f_1(x)=1$ of
$\leb^2(\re^n,d\mu)$ belongs to $D(\bar{L})$ and that $\bar{L}f_1=0$. In fact for every $f\in
C^{\infty}_{0}(\re^{n})$ dense in $\leb^2(\re^n,d\mu)$  we have again from\refeq{eq:levdirval}
with $\alpha_{ij}=0$ that
\begin{eqnarray*}
\langle f_1,\bar{L}f\rangle_\mu&=&\int_{\re^n} [\bar L f ]\phi^2dx=\int_{\re^n} [L f] \phi^2dx\\
 &=&\int_{\re^n}\phi^2 \frac{L_0(\phi f)-fL_0(\phi)}{\phi}\, dx=\int_{\re^n}\left[\phi L_0(\phi f)-\phi fL_0(\phi)\right]\,dx\\
 &=&-\frac{1}{2}\int\!\!\int_{y\neq0}\left[\delta_y\phi\,\delta_y(\phi f)-\delta_y(\phi f)\delta_y\,\phi\right ]\,dx=0
\end{eqnarray*}
Being $\bar L$ self-adjoint this implies first that $f_1\in D(\bar
L)$, and that $\langle \bar{L}f_1,f\rangle_\mu=0$ for every $f\in
C_0^{\infty}$ dense in $\leb^{2}(\re^{n},d\mu)$, and then that $\bar
Lf_1=0$.
\end{proof}


This result will now put us in condition to perform a suitable Doob
transformation with the confidence that we can also associate a
L\évy-type, Markov process $X$ to the chosen wave functions. In
fact, being $\mu(dx)=\rho(x)dx=\phi^2(x)dx$ an invariant measure for
$X_t$, taking as before $S(x)=0$ and $E\in\re$ namely
\begin{equation*}
 \psi(x)=\phi(x)\qquad\qquad\Psi(t,x)=e^{-iEt}\phi(x)
\end{equation*}
and by applying (\ref{eq:hamiltonianosimmetrico}), we immediately
get the following expression for the Hamiltonian operator associated
to $X_{t}$ of Theorem~\ref{th:esempio2}
\begin{equation}\label{LSham}
 [Hf](x):=[-L_0f+Vf](x)=-\int_{y\neq0}[\delta_y^2f](x)\,\ell(dy)+V(x)f(x)
\end{equation}
where the potential function now is
\begin{equation}\label{potential2}
 V(x)=\frac{[L_0\phi](x)}{\phi(x)}+E=\frac{1}{\phi(x)}\int_{y\neq0}[\delta_y^2\phi](x)\,\ell(dy)+E
\end{equation}
showing also that $\Psi$ is a stationary solution of
\begin{equation}\label{LSeqV}
 i\partial_t\Psi=H\Psi=(-L_0+V)\Psi=-\int_{y\neq0}\delta^2_y\Psi\,\ell(dy)+V\Psi
\end{equation}
namely of our L\évy-Schr\"odinger equation with the
potential\refeq{potential2}.

\section{Cauchy noise}\label{cauchy}

We will conclude the paper by proposing (in the one-dimensional case
$n=1$) two examples for the simplest stable, non Gaussian L\évy
background noise produced by the generator $L_0$ of a Cauchy
process, namely an operator of the forn\refeq{eq:levval3} without
Gaussian term ($a=0$) and with L\évy measure
\begin{equation}\label{cauchyLM}
    \ell(dx)=\frac{dx}{\pi x^2}
\end{equation}
Remark that in general for this Cauchy background noise with L\évy measure\refeq{cauchyLM} the
generator\refeq{generator} becomes
\begin{equation}\label{generator1}
    Lf=\frac{1}{\pi}\int_{y\neq0}\left(\frac{\delta_y^2f}{y^2}+\frac{1}{y}\frac{\delta_y
\phi}{\phi}\,\frac{\delta_y f}{y}\right)dy
\end{equation}
and that the convergence of this integral in $y=0$ is a consequence of the fact that, for $y\to0$,
$\delta_y^2f$ vanishes at the second order, while $\delta_y \phi$ and $\delta_y f$ are
infinitesimal of the first order. The corresponding pure jump Cauchy-Schr\"odinger equation
\begin{equation}\label{causchr}
    i\partial_t\Psi=-\int_{y\neq0}\frac{\delta^2_y\Psi}{\pi
    y^2}\,dy+V\Psi
\end{equation}
has already been discussed in various disguises in several previous papers~\cite{cufaro6,garbacz}
and we will show here two examples of its stationary solutions for potentials of the
form\refeq{potential}.

\begin{figure}
\begin{center}
\includegraphics*[width=10cm]{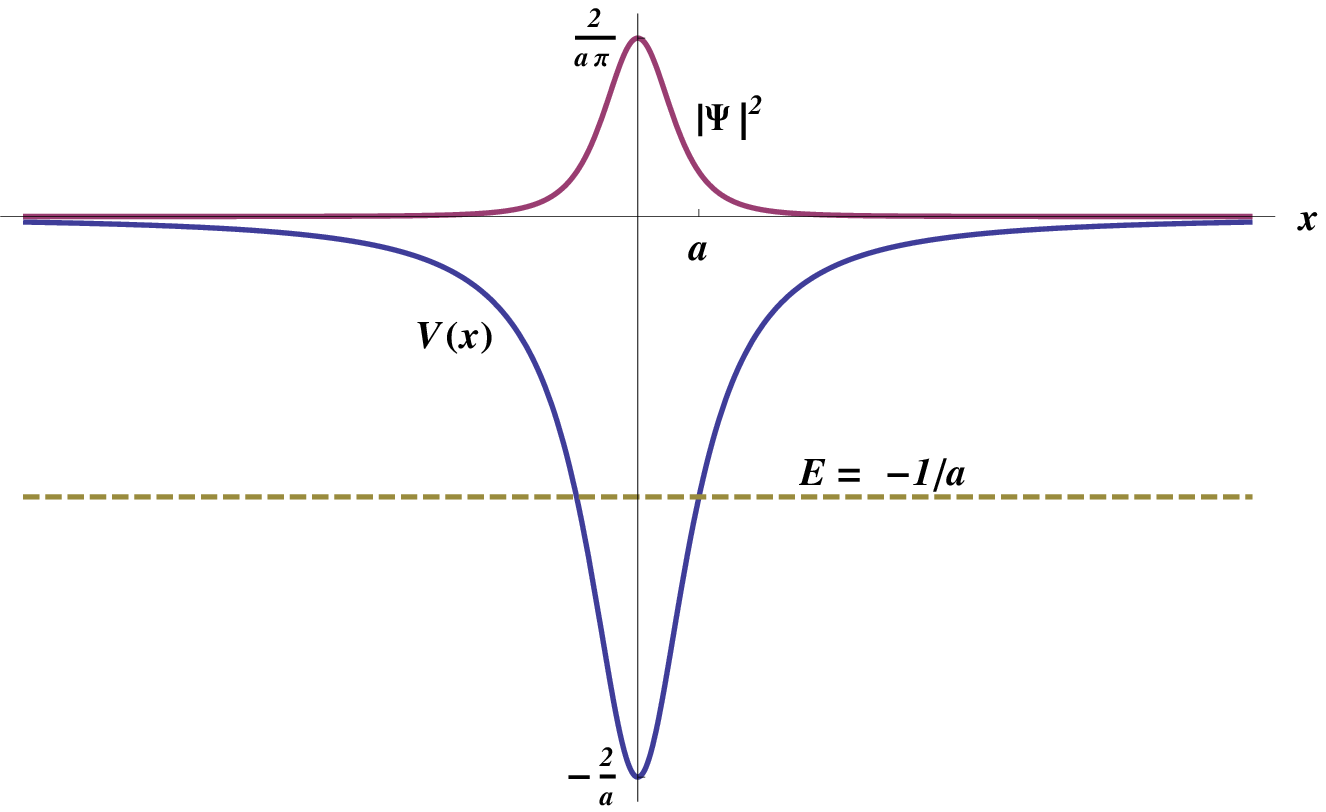}
\caption{Potentials $V(x)$ and square modulus of the stationary wave functions $|\Psi|^2$ for the
pair\refeq{pot1} and\refeq{wf1}.}\label{fig01}
\end{center}
\end{figure}
To define our invariant measure $\mu(dx)=\rho(x)dx=\phi^2(x)dx$ let us take first of all the
functions
\begin{equation}\label{ampl1}
    \phi(x)=\sqrt{\frac{2a}{\pi}}\frac{a}{a^2+x^2}\qquad\qquad\rho(x)=\phi^2(x)=\frac{2}{a\pi}\left(\frac{a^2}{a^2+x^x}\right)^2
\end{equation}
so that the stationary \emph{pdf} will be that of a $\mathfrak{T}_a(3)$ Student
law~\cite{cufaro6}. A direct calculation of\refeq{potential} with these entries will then show
that, by choosing the energy origin so that $E=-1/a$ and $V(\pm\infty)=0$, we have
\begin{equation}\label{pot1}
    V(x)=-\,\frac{2a}{x^2+a^2}
\end{equation}
In other words the wave function
\begin{equation}\label{wf1}
    \Psi(x,t)=\sqrt{\frac{2a}{\pi}}\frac{a}{a^2+x^2}\,e^{-it/a}
\end{equation}
turns out to be a stationary solution of the equation\refeq{causchr} with potential\refeq{pot1}
corresponding to the eigenvalue $E=-1/a$. This result is summarized in the Figure~\ref{fig01}. On
the other hand we see from the Theorem~\ref{th:esempio2} that to the wave function\refeq{wf1} we
can also associate a L\évy-type, Markov process $X_t$ completely defined by yhe
generator\refeq{generator} with
\begin{equation*}
    \gamma(x,y)=\frac{a^2+(x+y)^2}{a^2+x^2}\,\bm 1_{[-1,1]}(y)\qquad\qquad\lambda(x;dy)=\frac{a^2+x^2}{a^2+(x+y)^2}\,\frac{dy}{\pi y^2}
\end{equation*}
as can be deduced from\refeq{coeff} and\refeq{ampl1}.

\begin{figure}
\begin{center}
\includegraphics*[width=10cm]{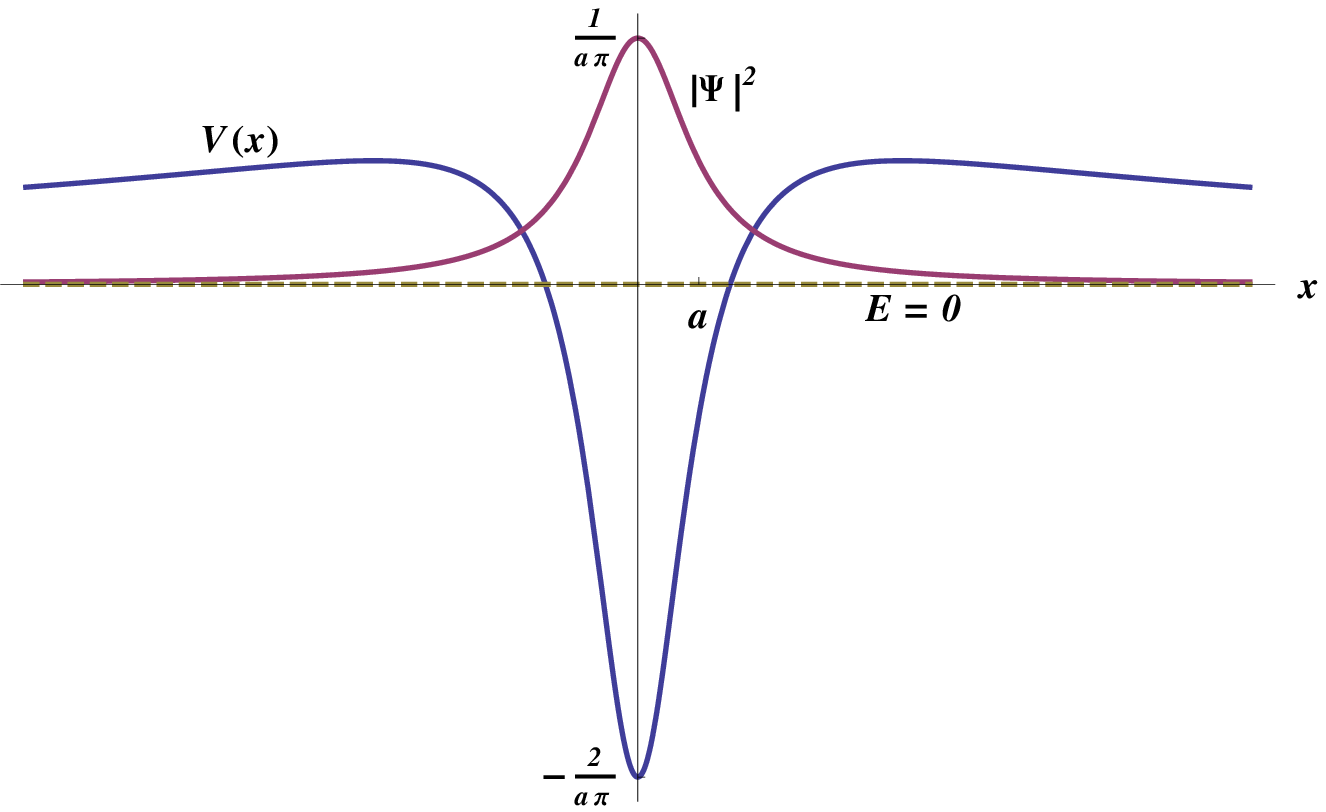}
\caption{Potentials $V(x)$ and square modulus of the stationary wave functions $|\Psi|^2$ for the
pair\refeq{pot2} and\refeq{wf2}.}\label{fig02}
\end{center}
\end{figure}
In a similar vein, and always for the same equation\refeq{causchr}, we can take as a second
example the starting functions
\begin{equation*}
   \phi(x)=\sqrt{\frac{a}{\pi}}\frac{1}{\sqrt{a^2+x^2}}\qquad\qquad\rho(x)=\phi^2(x)=\frac{1}{\pi}\frac{a}{a^2+x^2}
\end{equation*}
namely the \emph{pdf} of a $\mathfrak{C}(a)=\mathfrak{T}_a(1)$ Cauchy law. Treading the same path
as before, a slightly more laborious calculation will  show that, by choosing now $E=0$ to have
again $V(\pm\infty)=0$, we find
\begin{equation}\label{pot2}
    V(x)=-\,\frac{2}{\pi}\left[\frac{1}{\sqrt{a^2+x^2}}+\frac{x}{a^2+x^2}\log\left(\sqrt{1+\frac{x^2}{a^2}}-\frac{x}{a}\right)\right]
\end{equation}
so that now the wave function
\begin{equation}\label{wf2}
    \Psi(x,t)=\sqrt{\frac{a}{\pi}}\frac{1}{\sqrt{a^2+x^2}}
\end{equation}
is stationary solution with eigenvalue $E=0$ of the equation\refeq{causchr} with
potential\refeq{pot2}. The potential and the corresponding \emph{pdf} are depicted in the
Figure~\ref{fig02}. The L\évy-type, Markov process $X_t$ associated to this wave function is again
defined by the generator\refeq{generator} with
\begin{equation*}
    \gamma(x,y)=\sqrt{\frac{a^2+(x+y)^2}{a^2+x^2}}\,\bm 1_{[-1,1]}(y)\qquad\qquad\lambda(x;dy)=\sqrt{\frac{a^2+x^2}{a^2+(x+y)^2}}\,\frac{dy}{\pi y^2}
\end{equation*}
The two generators  introduced here completely determine the two Markov processes associated to
our stationary solutions of the Cauchy-Schr\"odinger equation\refeq{causchr}.

\section{Conclusions}\label{se:articolo6}

The adoption, proposed in a few previous papers~\cite{cufaro6}, of
the \LS\ equation -- a generalization of the usual Schr\"odinger
equation associated to the Wiener process -- amounts in fact to
suppose that the behavior of the physical systems in consideration
is based on an underlying L\'evy process that can have both Gaussian
(continuous) and non Gaussian (jumping) components. The consequent
use of all the gamut of the \id, even non stable, processes on the
other hand turns out to be important and physically meaningful
because there are significant cases that fall in the domain of the
\LS\ picture, without being in that of a stable (fractional)
Schr\"odinger equation~\cite{laskin}. In particular the simplest
form of a relativistic, free Schr\"odinger
equation~\cite{cufaro6,garbacz,ichinose} can be associated with a
peculiar type of self-decomposable, non stable process acting as
background noise. Moreover in many instances of the \LS\ equation
the resulting energy-momentum relations can be seen as small
corrections to the classical relations for small values of certain
parameters~\cite{cufaro6}. It must also be remembered that -- in
discordance with the stable, fractional case -- our models are not
tied to the use of background noises with infinite variances: these
can indeed be finite even for purely non Gaussian noises -- as for
instance in the case of the relativistic, free Schr\"odinger
equation -- and can then be used as a legitimate measure of the
dispersion. Finally let us recall that a typical non stable, Student
L\'evy noise seems to be suitable for applications, as for instance
in the models of halo formation in intense beam of charged particles
in accelerators~\cite{applications,cufaro2,vivoli}.

In view of all that it was then important to explicitly give more
rigorous details about the formal association between \LS\ wave
functions and the underlying L\évy processes, namely a true
generalized stochastic mechanics. And it was urgent also to explore
this L\évy--Nelson stochastic mechanics by adding suitable
potentials to the free \LS\ equation, and by studying the
corresponding possible stationary and coherent states. To this end
we found expedient to broaden the scope of our enquiry to the field
of Markov processes more general than the L\évy processes

From this standpoint in the present paper we have studied -- with the aid of the Dirichlet forms
-- under what conditions the generalized Schr\"o\-din\-ger equation\refeq{levyschr}, with a fairly
general self-adjoint Hamiltonian $H$, admits a stochastic representation in terms of Markov
processes: a conspicuous extension of the well known, older results of the stochastic
mechanics~\cite{nelson}. More precisely it has been shown how we can associate to every stationary
wave functions $\Psi$, of the form\refeq{wavefunct} and solutions of the equation\refeq{levyschr},
a bilateral, time-homogeneous Markov process $(X_t)_{t\in\re}$ whose generator $L$ in its turn
plays the role of the starting point to produce exactly the Hamiltonian $H$ of the
equation\refeq{levyschr}. This association moreover is defined in such a way that, in analogy with
the well known Born postulate of quantum mechanics, $|\Psi|^2$ always coincides with the
\emph{pdf} of $(X_t)_{t\in\re}$.

The whole procedure adopted here is inherently based on the Doob
transformation\refeq{eq:tipo-doob} previously
adopted~\cite{albeverio,morato} in the particular case of the Wiener
process to get the usual Schr\"odinger equation, as we have
summarized in the Section~\ref{sm}. This choice allows us, among
other things, to sidestep for the time being the problem of the
explicit definition of a dynamics for jump processes that would pave
the way to recover our association along a more traditional path,
either by means of the Newton law, as originally done~\cite{nelson},
or through a variational approach, as in later
advances~\cite{guerra}. The definition of these structures, whose
preliminary results have already been presented
elsewhere~\cite{andrisani}, seems indeed at present to require more
ironing and will be the object of future enquiries.

We have then focused our attention on the case of the \LS\ equation\refeq{LSeqV}, a particular
kind of generalized Schr\"odinger equation characterized by an Hamiltonian\refeq{LSham} derived
from a L\évy process generator. This equation was previously suggested only in an heuristic
way~\cite{cufaro6}, while in the present paper we succeeded in proving a few rigorous results:
first we showed how the stationary wave functions of this \LS\ equation actually satisfy the
conditions required to be associated to Markov processes; then we pointed out that the \LS\
Hamiltonian $H$ is composed of a kinetic part (the generator $L_0$ of our background, symmetric
L\évy process) plus a potential $V$, a term that was lacking of a justification in our previous
formulations. Third we also proved that the bilateral, time-homogeneous Markov processes
associated to a stationary wave function $\Psi$ turn out in fact to be L\évy-type processes, a
generalization of the L\évy processes which is at present under intense scrutiny~\cite{jacob}.
Finally we presented a few examples of stationary solutions of \LS\ equations with Cauchy
background noise.

These were much needed advances conspicuously absent in the previous
papers, as already explicitly remarked there, It would be important
now first to extend these results even to the non stationary wave
packets solutions of the generalized Schr\"odinger
equation\refeq{levyschr}, at least in its \LS\ form\refeq{LSeqV},
that have been extensively studied in a recent paper~\cite{cufaro6}
where their inherent muli-modality has been put in evidence. Then to
give a satisfactory formulation of the Nelson dynamics of the jump
processes involved: a much needed advance that would constitute an
open window on the true nature of these special processes. And
finally a detailed study of the characteristics of the L\évy-type
processes associated to the \LS\ wave functions: this too will be
the subject of future papers.

\begin{appendix}

\section{Proof of Proposition~\ref{le:lemmadom}}\label{appendix}

We begin by proving\refeq{eq:levintparti} for $\phi\in
C_0^{\infty}(\re^n)\subseteq\mathcal{S}(\re^n)\subseteq D(L_0)$. In fact in this case we
apparently have $\phi f\in C_0^{\infty}(\re^n)$, while from\refeq{eq:levval2}
and\refeq{eq:provalemma0} with $A=0$ it is
\begin{eqnarray}\label{eq:provalemma}
 L_0(\phi f)&=&\int_{y\neq0} \delta_y^2(\phi f)\,\ell(dy)\nonumber\\
            &=&f\int_{y\neq0} \delta_y^2\phi\,\ell(dy)+\phi\int_{y\neq0} \delta_y^2f\,\ell(dy)+\int_{y\neq0} \delta_y\phi\,\delta_y
            f\,\ell(dy)\nonumber\\
            &=&fL_0\phi+\phi L_0f+\int_{y\neq0} \delta_y\phi\,\delta_y f\,\ell(dy)
\end{eqnarray}
because we can see that the third term of\refeq{eq:provalemma} belongs to
$\leb^2_{\re}(\re^n,dx)$. Being indeed $f$ and $\phi$ bounded, and $\phi\in
C_0^{\infty}(\re^n)\subseteq D(L_0)\subseteq D(\dir_0)$, from\refeq{eq:levdirdom} we get
\begin{eqnarray*}
 \int\left|\int_{y\neq0} \delta_yf\delta_y\phi\,\ell(dy)\right|^2dx&\leq&  \int\!\!\int_{y\neq0}
 \left|\delta_yf\,\delta_y\phi\right|^2\ell(dy)dx\\
 &\leq& 4\kappa^2\int\!\!\int_{y\neq0} |\delta_y\phi|^2\,\ell(dy)dx<\infty
\end{eqnarray*}
with $\kappa=\sup |f|$.

If instead we suppose that $\phi\in D(L_0)\subseteq D(\dir_0)$, we have $\phi
f\in\leb^2_{\re}(\re^n,dx)$ since $f$ is bounded. As a first step let us show that $\phi f\in
D(\dir_0)\supseteq D(L_0)$: because of\refeq{eq:levdirdom} to do that we have just to prove that
\begin{equation}
 \label{eq:provalemma1}
\int\!\!\int_{y\neq0}|\delta_y(\phi f)|^2\,\ell(dy)dx<\infty
\end{equation}
Remark that from\refeq{eq:propr_delta} we have
\begin{eqnarray}
\label{eq:provalemma2}
 \int\!\!\int_{y\neq0}|\delta_y(\phi f)|^2\,\ell(dy)dx&\leq& 2\left[\int\!\!\int_{y\neq0}|\phi\,\delta_y f|^2\,\ell(dy)dx+ \int\!\!\int_{y\neq0}|f\delta_y \phi|^2\,\ell(dy)dx\right.\nonumber\\
&&\qquad\quad\left.+\int\!\!\int_{y\neq0}|\delta_y\phi\delta_y f|^2\,\ell(dy)dx\right]
\end{eqnarray}
The second and the third integrals on the right of (\ref{eq:provalemma2}) are finite because $f$
is bounded and $\phi\in D(\dir_0)$; as for the first integral, instead, since
$\phi\in\leb^2_{\re}(\re^n,dx)$, it is enough to remark that, from the typical property of L\évy
measures
\begin{equation*}
    \int_{y\neq0}\left(|y|^2\wedge1\right)\,\ell(dy)<\infty
\end{equation*}
 and since $f\in C_0^\infty(\re^n)$, we have
\begin{eqnarray*}
 \int_{y\neq0}|\delta_yf|^2(x)\,\ell(dy)&=&\int_{|y|\leq1,\,y\neq0}|\delta_yf|^2(x)\,\ell(dy)+\int_{|y|\geq1}|\delta_yf|^2(x)\,\ell(dy)\\
 &\leq& C\int_{|y|\leq1,\,y\neq0}|y|^2|\nabla f(x)|^2\,\ell(dy)+C'\int_{|y|\geq1}\ell(dy)\\
 &\leq& C''\int_{|y|\leq1,\,y\neq0}|y|^2\,\ell(dy)+C'\int_{|y|\geq1}\ell(dy)=M<\infty
\end{eqnarray*}
with $C''=C\sup|\nabla f(x)|^2$. In a similar way we can prove that also $L_0f$ is a bounded
function, a remark that will be useful in the following.

In order to complete the proof of the proposition we will show now that it exists a $\psi\in
\leb_{\re}^2(\re^n,dx)$ with the form of the second member of\refeq{eq:levintparti}, and such that
for every $g\in D(L_0)$
\begin{equation*}
 \langle L_0 g,\phi f\rangle=\langle g,\psi\rangle
\end{equation*}
If this is true we can indeed deduce form the self-adjointness of $L_0$ that $f\phi\in D(L_0)$,
and that\refeq{eq:levintparti} is verified. On the other hand, since the Proposition
\ref{pr:closable-diri} actually states that $C_0^{\infty}(\re^n)$ is a \emph{core} for
$(L_0,D(L_0))$, we can restrict our discussion to $g\in C_0^{\infty}(\re^n)$, so that we have
\begin{eqnarray}\label{eq:provalemma3}
 \langle L_0 g,\phi f\rangle&=&\langle f L_0 g,\phi \rangle\nonumber\\
      &=&\langle  L_0(fg),\phi \rangle-\langle g L_0 f,\phi \rangle-\int\!\! \phi\left(\int_{y\neq0}\delta_y g\,\delta_y f\,\ell(dy)\right) dx\nonumber\\
      &=&\langle g,fL_0\phi\rangle-\langle g,\phi L_0 f\rangle-\int\!\!\int_{y\neq0}\phi\,\delta_y
g\,\delta_y f\,\ell(dy) dx
\end{eqnarray}
where the second equality follows from the fact that (\ref{eq:levintparti}) has been already
proved for $f,g\in C_0^{\infty}(\re^n)$, while the third equality comes from the previously quoted
$L_0f$ boundedness (so that $\phi L_0f\in\leb_{\re}^2(\re^n,dx)$ and the scalar products exist)
and the $L_0$ self-adjointness. Take now the third term of\refeq{eq:provalemma3}: according
to\refeq{eq:propr_delta} we have
\begin{equation}
\label{eq:provalemma3bis}
 \phi\,\delta_y g\,\delta_y f=\delta_y (\phi g)\,\delta_y f-g\,\delta_y \phi\,\delta_y f-\delta_y\, \phi\,\delta_y f\,\delta_y g
\end{equation}
Since we have seen that $\phi g\in D(\dir_0)$, from\refeq{eq:levdirval} we first find
\begin{equation} \label{eq:provalemma4}
\int\!\! \int_{y\neq0}\delta_y (\phi g)\,\delta_y f\,\ell(dy) dx=-2\dir_0(\phi g,f)=-2\langle \phi
g,L_0f\rangle=-2\langle g,\phi L_0f\rangle
\end{equation}
Then, taking now into account that
\begin{equation*}
 \int\!\! \int_{y\neq0}\big|\delta_y \phi \,\delta_y f\,\delta_y g\big|^2\,\ell(dy) dx\leq C \int \!\!\int_{y\neq0}|\delta_y \phi|^2\,\ell(dy) dx <\infty
\end{equation*}
from the Fubini's theorem and the symmetry of $\ell(dy)$ it results with the change of variables
$x\rightarrow x+y$ and $y\rightarrow -y$ that
\begin{equation}
\label{eq:provalemma5}
 \int\!\! \int_{y\neq0}\delta_y \phi \,\delta_y f\,\delta_y g\,\ell(dy) dx=-\int\!\! \int_{y\neq0}\delta_y \phi \,\delta_y f\,\delta_y g\,\ell(dy) dx=0
\end{equation}
Finally, by observing that $f$ is bounded and $\phi\in D(\dir_0)$, we have
\begin{equation*}
  \int_{y\neq0}[\delta_y \phi ](x)[\delta_y f](x)\,\ell(dy)\;\in\;\leb_{\re}^{2}(\re^n,dx)
\end{equation*}
So, by collecting\refeq{eq:provalemma3},\refeq{eq:provalemma3bis},\refeq{eq:provalemma4}
and\refeq{eq:provalemma5}, we get
\begin{equation*}
 \langle L_0 g,\phi f\rangle=\left\langle g\,,\,fL_0\phi +\phi L_0f+\!\!\int_{y\neq0}\!\!\delta_y \phi \,\delta_y f\,\ell(dy)\right\rangle
\end{equation*}
which completes the proof.

\end{appendix}
\vspace{10pt}
 \noindent\textsc{Acknowledgements:} The authors want
to thank L.M.\ Morato for invaluable suggestions and discussions.

\end{document}